\documentclass[10pt, a4paper, reqno]{amsart}

\usepackage{amsthm, amsmath, amssymb, amsfonts, stmaryrd, latexsym, mathrsfs, enumerate}
\usepackage{graphicx}
\usepackage[english]{babel}
\usepackage[all]{xy}
\usepackage{nicefrac}
\usepackage{mparhack}
\usepackage{enumitem}

\theoremstyle{plain}
\newtheorem{lemma}{Lemma}
\newtheorem{theo}{Theorem}
\newtheorem{prop}{Proposition}

\newtheorem{condition}{Condition}

\theoremstyle{definition}

\newcommand{\F}{\mathbb{F}}
\newcommand{\Z}{\mathbb{Z}}
\newcommand{\Q}{\mathbb{Q}}

\newcommand{\kp}{\mathfrak{p}}

\newcommand{\sD}{\mathscr{D}}

\newcommand{\kk}{\kappa}

\newcommand{\cO}{\mathcal{O}}
\newcommand{\Gal}{\mathop{\mathrm{Gal}}\nolimits}

\newcommand{\Res}{\mathop{\mathrm{Res}}\nolimits}
\newcommand{\rad}{\mathop{\mathrm{rad}}\nolimits}
\newcommand{\soc}{\mathop{\mathrm{soc}}\nolimits}

\newcommand{\Ppow}{\mathop{\uparrow{}p}}

\newcommand{\sfr}[2]{\nicefrac{#1}{#2}}
\newcommand{\I}[1]{\llbracket #1\rrbracket}

\newcommand{\Din}{\rotatebox{270}{$\in$}}

\newcounter{enumi_saved}

\newcommand{\saveenum}{\setcounter{enumi_saved}{\value{enumi}}}
\newcommand{\loadenum}{\setcounter{enumi}{\value{enumi_saved}}}

\makeatletter
\def\imod#1{\allowbreak\mkern10mu({\operator@font mod}\,\,#1)}
\makeatother

\allowdisplaybreaks

\begin{document}

\title[Polynomials generating cyclic extensions over $\kp$-adic fields]{A characterization of
  Eisenstein polynomials generating cyclic extensions of degree $p^2$ and $p^3$ over an unramified
  $\kp$-adic field} \author{Maurizio Monge}
\email{maurizio.monge@sns.it}
\address{Scuola Normale Superiore di Pisa - Piazza dei Cavalieri, 7 -
  56126 Pisa}
\date{\today}

\keywords{Eisenstein polynomial, totally ramified extension, p-adic field, cyclic extension,
  ramification theory, local class field theory}
\subjclass[2010]{11S05, 11S15}

\begin{abstract}
  Let $p\neq2$ be a prime. We show a technique based on local class field theory and on the
  expansions of certain resultants which allows to recover very easily Lbekkouri's characterization
  of Eisenstein polynomials generating cyclic wild extensions of degree $p^2$ over $\Q_p$, and to
  extend it to the case of the base field $K$ being an unramified extension of $\Q_p$.

  Furthermore, when a polynomial satisfies only some of the stated conditions, we show that the
  first unsatisfied condition gives information about the Galois group of the normal closure. This
  permits to give a complete classification of Eisenstein polynomials of degree $p^2$ whose
  splitting field is a $p$-extension, providing a full description of the Galois group and its
  higher ramification subgroups.

  We then apply the same methods to give a characterization of Eisenstein polynomials of degree
  $p^3$ generating a cyclic extension.

  In the last section we deduce a combinatorial interpretation of the monomial symmetric function
  evaluated in the roots of the unity which appear in certain expansions.
\end{abstract}

\maketitle

\section{Introduction}

In this paper we explore the techniques which can be used to deduce necessary and sufficient
conditions for a polynomial to have a certain Galois group as group of the splitting field over a
$\kp$-adic field.

Lbekkouri gave in \cite{Lbekkouri2009a} congruence conditions for Eisenstein polynomials of degree
$p^2$ with coefficients in the rational $\kp$-adic field $\Q_p$, which are satisfied if and only if
the generated extension is Galois. Since the multiplicative group $U_{1,\Q_p}$ of $1$-units of
$\Q_p$ has rank $1$ as $\Z_p$-module and in particular
$\sfr{U_{1,\Q_p}(\Q_p^\times)^p}{(\Q_p^\times)^p}\cong\sfr{\Z}{p\Z}$, we have by local class field
theory that every Galois totally ramified extension of degree $p^2$ over $\Q_p$ is cyclic, and
consequently over $\Q_p$ the problem is reduced to finding conditions for Eisenstein polynomials of
degree $p^2$ to generate a cyclic extension.

If the base field $K$ is a proper extension of $\Q_p$ this is no longer true, so the restriction of
considering polynomials which generate cyclic extensions has to be added explicitly. If $K$ is
ramified over $\Q_p$ even the characterization of the possible upper ramification jumps is a
non-trivial problem (see \cite{Maus1971,Miki1981}) and the problem seems to be very difficult for a
number of other reasons, so we will only consider fields $K$ which are finite unramified extensions
over $\Q_p$, with residue degree $f=f(K/\Q_p)=[K:\Q_p]$. In this setting the problem is still
tractable without being a trivial generalization of the case over $\Q_p$, and we will show a
technique which allows to handle very easily the case of degree $p^2$.

During the proof the Artin-Hasse exponential function comes into play, and we use it to clarify the
connection between the image of the norm map and the coefficients of the Eisenstein polynomial.

While some condition are necessary to force the splitting field to be a $p$-extension, the remaining
conditions can be tested in order, and the first which fails gives information on the Galois group
of the splitting field. Taking into account another family of polynomial which can never provide a
cyclic extension of degree $p^2$, we give a full classification of the polynomials of degree $p^2$
whose normal closure is a $p$-extension, providing a complete description of the Galois group of the
normal closure with its ramification filtration, see \cite{caputo2007classification} for an abstract
classification of all such extensions when the base field is $\Q_p$.

We then show how the same methods apply to characterize Eisenstein polynomials of degree $p^3$
generating a cyclic extension. This case a substantially more complicated is obtained, but the
strategy used in degree $p^2$ can still be applied in a relatively straightforward way. We plan to
show in a forthcoming paper the how the techniques used here can be extended to give a
characterization of polynomials generating all remaining groups of order $p^3$, including the
non-abelian ones.

In the last section we give a combinatorial interpretation of certain sums of roots of the unity
which appear during the proof, it is actually much more than needed but it has some interest on its
own.

\subsection{Acknowledgements}
We feel indebted with Luca Caputo and Fran\c{c}ois Laubie for some fruitful discussions and for the
motivation to try to recover and generalize Lbekkouri's result. We would also like to thank Philippe
Cassou-Nogu\`es, Ilaria Del Corso, Roberto Dvornicich and Boas Erez for various discussions on this
topic, and the Institut de Math\'ematiques de Bordeaux for hospitality while conceiving this work.

\section{Preliminaries}
Let $K$ be a $\kp$-adic field, we denote by $\cO_K$ the ring of integers, by $\kp_K$ the maximal
ideal, by $U_{i,K}$ the $i$-th units group, by $\kk_K$ the residue field $\sfr{\cO_K}{\kp_K}$, and
for $x\in{}\cO_K$ let $\bar{x}$ be its image in $\kk_K$. We will denote with $[K^\times]_K$ the
group of $p$-th power classes $\sfr{K^\times}{(K^\times)^p}$. For integers $a,b$, it will be
convenient to denote by $\I{a,b}$ the set of integers $a\leq{}i\leq{}b$ such that $(i,p)=1$.

We start computing modulo which power of $p$ we must consider the coefficients of an Eisenstein
polynomial (this computation is very well known and we repeat it only to keep the paper
self-contained, see \cite{Krasner1962} for more details): let $f(X)=\sum_{i=0}^nf_{n-i}X^i$ and
$g(X)=\sum_{i=0}^n g_{n-i}X^i$ be Eisenstein polynomials of degree $n$ say, $\rho$ a root of $g$,
$\pi=\pi_1,\pi_2,\dots$ the roots of $f$ with $\pi$ the most near to $\rho$, and put $L=K(\pi)$. Let
$v$ be the biggest lower ramification jump and $\sD_f=f'(\pi)$ be the different, if
\[
   \left|(f_{n-i}-g_{n-i})\pi^i\right| < \left|\pi^{v+1}\sD_f\right|
\]
then being
\[
   f(\rho) = f(\rho)-g(\rho) = \sum_{i=0}^n \left(f_{n-i}-g_{n-i}\right)\rho^i
\]
we obtain $|f(\rho)|<|\pi^v\sD_f|$. We have
\[
  \left|(\rho-\pi)\cdot \prod_{i=2}^n(\pi-\pi_i)\right| \leq
  \left|\prod_{i=1}^n(\rho-\pi_i)\right| < \left|\pi^{v+1}\sD_f\right|,
\]
in fact $|\pi-\pi_i|\leq|\rho-\pi_i|$ for $i\geq2$ or we would contradict the choice of $\pi=\pi_1$.
Consequently $|\rho-\pi|<|\pi^{v+1}|$ which is equal to the minimum of the $|\pi-\pi_i|$, and hence
$K(\rho)\subseteq{}K(\pi)$ by Krasner's lemma, and $K(\rho)=K(\pi)$ having the same degree.

Let now $K$ be unramified over $\Q_p$, then $U_{1,K}^{p^i}=U_{i+1,K}$, and consequently by local
class field theory the upper ramification jumps of a cyclic $p$-extension are $1,2,3,\dots$, and the
lower ramification jumps are $1,p+1,p^2+p+1,\dots$.

For an extension of degree $p^k$ with lower ramification jumps $t_0\leq{}t_1\leq\dots\leq{}t_{k-1}$
we can compute $v_L(\sD_{L/K})$ as $\sum_{i=1}^k (p^i-p^{i-1})t_{k-i}$, which for a cyclic $L/K$ of
degree $p^2$ or $p^3$ is $3p^2-p-2$ (resp. $4p^3-p^2-p-2$), while $v_L(\pi^{v+1}\sD_{L/K})$ is
respectively $3p^2=v_L(p^3)$ and $4p^3=v_L(p^4)$. Hence we obtain the condition on the precision of
the coefficients, which we state in a proposition for convenience:

\begin{prop} \label{prop_coeff}
  Let $L/K$ be a totally ramified cyclic extension of degree $n=p^2$ (resp. $n=p^3$) determined by
  the Eisenstein polynomial $f(X)=\sum_{i=0}^nf_{n-i}X^n$. Then the lower ramification jumps are
  $1,p+1$ (resp. $1,p+1,p^2+p+1$), $v_L(\sD_{L/K})$ is equal to $3p^2-p-2$ (resp. is
  $4p^3-p^2-p-2$), and the extension is uniquely determined by the classes of $f_n\imod{p^4}$ and
  $f_i\imod{p^3}$ for $0\leq{}i<n$ (resp. by the classes of $f_n\imod{p^5}$ and $f_i\imod{p^4}$ for
  $0\leq{}i<n$, for $n=p^3$).
\end{prop}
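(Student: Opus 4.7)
The plan is to verify the three assertions in sequence — the lower ramification jumps, the valuation of the different, and the coefficient precision — exploiting the framework already established in the preceding paragraphs.

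For the ramification jumps: because $K/\Q_p$ is unramified one has $U_{1,K}^{p^i} = U_{i+1,K}$, so any cyclic totally ramified $p$-extension $L/K$ of degree $p^k$ corresponds via local class field theory to a subgroup of $U_{1,K}$ cutting out a cyclic quotient of order $p^k$; consequently the upper ramification jumps must be exactly $1, 2, \dots, k$. Applying the Herbrand function $\psi_{L/K}$ — equivalently, the piecewise-linear transformation whose slope between consecutive upper jumps equals the order of the current upper ramification group — converts $\{1,2\}$ into $\{1, p+1\}$ and $\{1,2,3\}$ into $\{1, p+1, p^2+p+1\}$.

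For the different: the classical formula $v_L(\sD_{L/K}) = \sum_{i \geq 0} (|G_i| - 1)$ over the lower ramification groups gives, in degree $p^2$ with jumps $1, p+1$, the value $2(p^2-1) + p(p-1) = 3p^2 - p - 2$; in degree $p^3$ with jumps $1, p+1, p^2+p+1$, the value $2(p^3-1) + p(p^2-1) + p^2(p-1) = 4p^3 - p^2 - p - 2$.

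For the coefficient precision: I feed these data into the Krasner-type inequality derived at the start of the section. The largest lower jump $v$ is $p+1$ (resp.\ $p^2+p+1$), so
\[
  v_L(\pi^{v+1}\sD_f) \;=\; (v+1) + v_L(\sD_{L/K}) \;=\; 3p^2 \quad (\text{resp. } 4p^3).
\]
The required strict inequality $v_L((f_{n-i}-g_{n-i})\pi^i) > v_L(\pi^{v+1}\sD_f)$ becomes $v_L(f_{n-i}-g_{n-i}) > 3p^2 - i$ (resp.\ $> 4p^3 - i$). Since the coefficients lie in $K$ and $v_L = p^k\, v_K$, the case $i = 0$ forces $v_K(f_n - g_n) \geq 4$ (resp.\ $5$), i.e.\ $f_n$ is determined $\imod{p^4}$ (resp.\ $\imod{p^5}$), while for $1 \leq i < n$ the weaker condition $v_K \geq 3$ (resp.\ $4$) is already sufficient. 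The only delicate point — and the main bookkeeping obstacle — is checking that rounding these inequalities up to integer $v_K$-valuations does not raise the threshold for any intermediate index $i$; but since $3p^2 - i + 1 \leq 3p^2 = v_L(p^3)$ for $i \geq 1$ (and analogously $4p^3 - i + 1 \leq 4p^3 = v_L(p^4)$ in the cubic case), a uniform bound works throughout, giving exactly the precision stated in the proposition.
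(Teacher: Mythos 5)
Your argument is correct and is essentially the paper's own (the proof is contained in the discussion preceding the proposition): the upper jumps $1,\dots,k$ come from local class field theory via $U_{1,K}^{p^i}=U_{i+1,K}$, Herbrand's function converts them into the lower jumps $1$, $p+1$, $p^2+p+1$, the different is computed from the ramification filtration, and the Krasner bound $v_L(\pi^{v+1}\sD_f)=3p^2=v_L(p^3)$ (resp.\ $4p^3=v_L(p^4)$) yields exactly the stated precisions. The one slip is your parenthetical description of $\psi_{L/K}$: its slope between consecutive upper jumps is the \emph{index} $[G_0:G^u]$ of the current upper ramification group, not its order (taken literally for $k=3$ this would give $1+p^2$ instead of $p+1$ for the second lower jump), though the jumps you actually write down are the correct ones.
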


\subsection{Additive polynomials}
We will need a few facts about additive polynomials, and in particular some formulas to express in
terms of the coefficients the condition that and additive polynomial has range contained in the
range of some other additive polynomial. We resume what we need in the following

\begin{prop}
\label{prop_add}
Let $A(Y)=a_pY^p+a_1Y$ be an additive polynomial in $\kk_K[Y]$ such
  that $A'(0)\neq0$ and all the roots of $A(Y)$ are in $\kk_K$, and let $B(Y)=b_pY^p+b_1Y$,
  $C(Y)=c_{p^2}Y^{p^2}+c_pY^p+c_1Y$ and $D(Y)=d_{p^3}Y^{p^3}+d_{p^2}Y^{p^2}+d_pY^p+d_1Y$ be any
  three other additive polynomials in $\kk_K[Y]$. Then
\begin{itemize}[leftmargin=1.5em]
\item $B(\kk_K)\subseteq A(\kk_K)$ if and only if $b_p=a_p(\sfr{b_1}{a_1})^p$, and in this case
  $B(Y)$ is equal to $A(\sfr{b_1}{a_1}Y)$,
\item $C(\kk_K)\subseteq A(\kk_K)$ if and only if
  $c_p=a_p(\sfr{c_1}{a_1})^p+a_1(\sfr{c_{p^2}}{a_p})^{\sfr{1}{p}}$, and in this case $C(Y)$ can be
  written as $A(\beta{}Y^p+\sfr{c_1}{a_1}Y)$ with $\beta=(\sfr{c_{p^2}}{a_p})^{\sfr{1}{p}}$ or
  equivalently $\beta=\sfr{c_p}{a_1}-\sfr{a_p}{a_1}(\sfr{c_1}{a_1})^p$.
\item $D(\kk_K)\subseteq A(\kk_K)$ if and only if
  $\sfr{a_1}{a_p}(\sfr{d_{p^3}}{a_p})^{\sfr{1}{p}}+(\sfr{d_{p}}{a_1})^p=\sfr{d_{p^2}}{a_p}+
  (\sfr{a_p}{a_1})^p(\sfr{d_1}{a_1})^{p^2}$.
\end{itemize}
\end{prop}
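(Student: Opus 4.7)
The plan is to handle all three parts by a single uniform strategy: in each case try to write the given polynomial as $A\circ L$ for an additive $L\in\kk_K[Y]$ of matching degree, and show that the coefficient matching is both achievable and (essentially) necessary. The sufficient direction is immediate, since $A\circ L$ takes $\kk_K$ into $A(\kk_K)$ regardless of $L$. So first I would expand $A(L(Y))$ for a general additive $L$ (of the shape $\ell Y$ in part 1, $m_pY^p+m_1Y$ in part 2, and $n_{p^2}Y^{p^2}+n_pY^p+n_1Y$ in part 3), and equate coefficients with $B$, $C$, $D$ in turn. Using the perfectness of the finite field $\kk_K$ to take $p$-th roots unambiguously, the top and bottom equations uniquely determine the ``extreme'' coefficients of $L$, and the remaining equation(s) turn into the identities stated in the proposition. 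In parts 1 and 2 the explicit formulas $B(Y)=A((b_1/a_1)Y)$ and $C(Y)=A(\beta Y^p+(c_1/a_1)Y)$ fall out of this system, the two expressions for $\beta$ coinciding exactly when the condition on $c_p$ holds.

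For the converse I would use the following structural observation: $A(\kk_K)$ is an additive subgroup of index $p$ in $\kk_K$. Indeed, $A'(Y)=a_1\neq0$ makes $A$ separable, and its $p$ distinct roots lie in $\kk_K$ by hypothesis, so the additive homomorphism $A\colon\kk_K\to\kk_K$ has kernel of size $p$. Now for an arbitrary $B$ (resp.\ $C$, $D$) I would choose the coefficients of $L$ (resp.\ $M$, $N$) exactly as in the sufficiency analysis, killing every monomial in $B-A\circ L$ except one. In part 1 the surviving term is $(b_p-a_p(b_1/a_1)^p)Y^p$; in part 2, after setting the $Y^{p^2}$ and $Y$ coefficients to vanish, a multiple of $Y^p$ remains; in part 3, the choices
\[
n_{p^2}=(d_{p^3}/a_p)^{1/p},\qquad n_1=d_1/a_1,\qquad n_p=d_p/a_1-(a_p/a_1)(d_1/a_1)^p
\]
kill the $Y^{p^3}$, $Y$, and $Y^p$ terms, leaving only a multiple of $Y^{p^2}$. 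Writing the surviving monomial as $c\,Y^{p^k}$, the hypothesis $B(\kk_K)\subseteq A(\kk_K)$ together with $A(L(\kk_K))\subseteq A(\kk_K)$ and additive closure of $A(\kk_K)$ forces $cY^{p^k}\in A(\kk_K)$ for every $Y\in\kk_K$.

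Since Frobenius is a bijection on the finite field $\kk_K$, the set $\{cY^{p^k}:Y\in\kk_K\}$ is either $\{0\}$ or all of $\kk_K$; the second option contradicts $A(\kk_K)$ being a proper subgroup, so $c=0$, which is precisely the identity claimed in each part. I do not foresee a genuine obstacle: the only real calculation is part 3, where expanding $[d_p/a_1-(a_p/a_1)(d_1/a_1)^p]^p$ via $(X-Y)^p=X^p-Y^p$ in characteristic $p$ and dividing by $a_p$ rearranges the residual coefficient into the displayed form. The conceptual content is the index-$p$ observation together with the perfectness of $\kk_K$; the rest is careful bookkeeping of which coefficient of $L$, $M$, or $N$ kills which monomial.
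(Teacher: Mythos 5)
Your proof is correct, and the forward (sufficiency) direction plus the coefficient bookkeeping coincide with what the paper does; the genuine difference is in how you establish necessity. The paper gets the converse by citing the structure theory of additive polynomials (\cite[Chap.~5, \S2, Corollary~2.4]{fesenko2002local}): under the hypotheses $A'(0)\neq0$ and all roots of $A$ in $\kk_K$, the containment $B(\kk_K)\subseteq A(\kk_K)$ already forces a factorization $B=A\circ G$ with $G$ additive, after which degree counting pins $G$ down and comparing coefficients yields the identities. You instead bypass that black box: you observe that separability plus the rationality of the roots makes $A\colon\kk_K\to\kk_K$ an $\F_p$-linear map with kernel of order $p$, hence image of index $p$; you then choose $L$ (resp.\ $M$, $N$) so that $B-A\circ L$ collapses to a single monomial $cY^{p^k}$, and since $\{cY^{p^k}:Y\in\kk_K\}=c\kk_K$ by bijectivity of Frobenius, a nonzero $c$ would force the proper subgroup $A(\kk_K)$ to contain all of $\kk_K$. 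This is an elementary, self-contained substitute for the cited corollary (in effect you reprove exactly the special case needed), at the cost of having to verify that your successive choices of the coefficients of $L$, $M$, $N$ really do leave only one surviving monomial --- which they do, since in each case the extreme coefficients are determined independently and the middle ones are killed in order. The one caveat, shared with the paper's own statement, is the tacit assumption $a_p\neq0$ (your index-$p$ count needs $\deg A=p$, and the stated conditions divide by $a_p$); and your identity $(X-Y)^p=X^p-Y^p$ in the part-3 rearrangement is harmless given the paper's standing hypothesis $p\neq2$ (and holds in characteristic $2$ anyway).
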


Note that being $\kk_K$ finite and hence perfect the map $x\mapsto{}x^p$ is an automorphism, and we
just denote by $x\mapsto{}x^{\sfr{1}{p}}$ the inverse automorphism.

\begin{proof}
  Since $A'(0)\neq0$ and all the roots of $A(Y)$ are in $\kk_K$ we have from the theory of additive
  polynomials (see \cite[Chap. 5, \S2, Corollary 2.4]{fesenko2002local}) that if
  $B(\kk_K)\subseteq{}A(\kk_K)$ then $B(Y)=A(G(Y))$ for some other additive polynomial $G(Y)$ which
  should be linear considering the degrees, $G(Y)=\alpha{}Y$ say. Consequently it has to be
  $B(Y)=a_p\alpha^p{}Y^p+a_1\alpha{}Y$, and comparing the coefficients we obtain that
  $\alpha^p=(\sfr{b_1}{a_1})^p$ and should also be equal to $\sfr{b_p}{a_p}$. Similarly if
  $C(\kk_K)\subseteq{}A(\kk_K)$ we should have
\[
  C(Y)=A(\beta{}Y^p+\alpha{}Y)=a_p\beta^p{}Y^{p^2}+(a_p\alpha^p+a_1\beta)Y^p+a_1\alpha{}Y,
\]
and we deduce $\alpha=\sfr{c_1}{a_1}$, $\beta^p=\sfr{c_{p^2}}{a_p}$, and we obtain the condition
substituting $\alpha,\beta$ in $c_p=a_p\alpha^p+a_1\beta$. If $D(\kk_K)\subseteq{}A(\kk_K)$ then
$D(Y)$ should be $A(\gamma{}Y^{p^2}+\beta{}Y^p+\alpha{}Y)$ and hence
\[
a_p\gamma^p{}Y^{p^3}+(a_p\beta^p+a_1\gamma){}Y^{p^2}+(a_p\alpha^p+a_1\beta)Y^p+a_1\alpha{}Y,
\]
$\alpha=\sfr{d_1}{a_1}$, $\gamma=(\sfr{d_{p^3}}{a_p})^{\sfr{1}{p}}$, and $\beta^p$ can be written in
two different ways as
\[
     \sfr{d_{p^2}}{a_p}-\sfr{a_1}{a_p}(\sfr{d_{p^3}}{a_p})^{\sfr{1}{p}} = 
      (\sfr{d_{p}}{a_1} - \sfr{a_p}{a_1}(\sfr{d_1}{a_1})^p)^p.
\]
The condition is clearly also sufficient.
\end{proof}

The following proposition will also be useful, it gives a criterion to verify if the splitting field
of an additive polynomial of degree $p^2$ is a $p$-extension (that is, either trivial of cyclic of
degree $p$) which is slightly easier to test than the condition itself:

\begin{prop}
\label{add_gal}
  Let $A(Y)=Y^{p^2}+aY^p+bY$ be an additive polynomial in $\kk_K[Y]$, than the splitting field is a
  $p$-extension over $\kk_K$ precisely when $A(Y)$ has a root in $\kk_K^\times$, and
  $b\in(\kk_K^\times)^{p-1}$.
\end{prop}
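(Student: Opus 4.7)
My approach is to analyze the action of Frobenius on the roots of $A$. The hypothesis $b\in(\kk_K^\times)^{p-1}$ forces $b\neq 0$, so $A$ is separable with $p^2$ distinct roots forming a $2$-dimensional $\F_p$-vector space $V\subset\overline{\kk_K}$. The splitting field $L=\kk_K(V)$ is Galois over $\kk_K$ with cyclic Galois group generated by the Frobenius $\phi$, which acts $\F_p$-linearly on $V$; hence $[L:\kk_K]$ equals the order of $\phi|_V$ as an element of $\mathrm{GL}(V)\cong\mathrm{GL}_2(\F_p)$. So $L/\kk_K$ is a $p$-extension if and only if $\phi|_V$ has order $1$ or $p$, equivalently $(\phi|_V-I)^2=0$: the identity, or a nontrivial unipotent with a single Jordan block.

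For the direction $\Rightarrow$: both cases give $\phi|_V$ a nonzero fixed vector, i.e.\ a root of $A$ in $\kk_K^\times$. For the condition on $b$, I compute the product of the nonzero roots in two ways. Vieta applied to $A(Y)/Y$ yields $\prod_{v\in V\setminus\{0\}} v = b$ (using that $p^2-1$ is even), while partitioning $V\setminus\{0\}$ into its $p+1$ lines and using Wilson's theorem $\prod_{c\in\F_p^\times}c=-1$ on each line gives
\[
b=\prod_\ell\alpha_\ell^{p-1}
\]
for any choice of nonzero line-representatives $\alpha_\ell$ (the sign $(-1)^{p+1}$ disappears as $p$ is odd). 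If $\phi|_V=I$, the $\alpha_\ell$ can all be taken in $\kk_K^\times$ and $b=(\prod_\ell\alpha_\ell)^{p-1}$. If $\phi|_V$ is nontrivially unipotent, its action on $\mathbb{P}(V)\cong\mathbb{P}^1(\F_p)$ fixes exactly one line $\ell_0$ and cycles the other $p$ lines in one orbit of length $p$; I pick $\alpha_0\in\ell_0\cap\kk_K^\times$ together with $\alpha_1,\phi(\alpha_1),\dots,\phi^{p-1}(\alpha_1)$ on the remaining lines, so that $\prod_{i=0}^{p-1}\phi^i(\alpha_1)\in\kk_K^\times$ by $\phi$-invariance, and again $b\in(\kk_K^\times)^{p-1}$.

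For the direction $\Leftarrow$: given a root $\alpha\in\kk_K^\times$ of $A$, set $\delta=\alpha^{p-1}$ and $\gamma=b/\delta$, both in $(\kk_K^\times)^{p-1}$ by the hypotheses. Writing $F(Y)=Y^p$, a direct computation gives the identity
\[
A(Y)=(F-\gamma)\bigl((F-\delta)(Y)\bigr)
\]
of additive polynomials, the required coefficient match reducing to $\delta^{p+1}+a\delta+b=0$ which follows from $A(\alpha)=0$. Since both $\gamma$ and $\delta$ lie in $(\kk_K^\times)^{p-1}$, the kernels $\ker(F-\gamma)$ and $\ker(F-\delta)$ are contained in $\kk_K$. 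For each $w$ in the finite set $\ker(F-\gamma)\subset\kk_K$ one has to solve $Y^p-\delta Y=w$; the substitution $Y=\alpha Z$ turns this into the Artin--Schreier equation $Z^p-Z=w/\alpha^p$ over $\kk_K$, whose splitting field is $\kk_K$ itself or a cyclic extension of degree $p$. Hence $L/\kk_K$ is a $p$-extension. The main subtlety to watch for is the unipotent case in the forward direction: one must verify that along the Frobenius cycle the relevant product of line representatives is $\phi$-invariant on the nose (rather than merely up to an $\F_p^\times$-scalar), which is exactly what ensures $b$ becomes a genuine $(p-1)$-th power in $\kk_K^\times$.
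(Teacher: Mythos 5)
Your proof is correct, and while the backward direction ends up in the same place as the paper's, the overall route is genuinely different. The paper works throughout with the structure theory of additive polynomials: from a nonzero root $\eta$ fixed by the Galois action (obtained, as you do, from the $p$-group orbit count) it invokes the cited decomposition theorem to write $A(Y)=B(Y^p-\beta Y)$ with $\beta=\eta^{p-1}$, argues that the outer factor $B(Y)=Y^p-\alpha Y$ must split in $\kk_K$ lest $A$ generate an extension of order prime to $p$, and reads off $b=\alpha\beta\in(\kk_K^\times)^{p-1}$; the converse is the same factorization read backwards, with the substitution $Y\mapsto\eta Z$ reducing to Artin--Schreier. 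Your backward direction reproduces exactly this factorization, but verifies it by the bare computation $\delta^{p+1}+a\delta+b=0$ instead of citing the composition theorem, which makes it self-contained. Your forward direction is where the real divergence lies: you linearize the Galois action as $\phi|_V\in\mathrm{GL}_2(\F_p)$, characterize the $p$-extension condition as unipotence, and then obtain $b\in(\kk_K^\times)^{p-1}$ by an explicit Vieta/Wilson computation of the product of the nonzero roots over the $p+1$ lines of $V$, checking $\phi$-invariance of the product of representatives along the length-$p$ orbit in $\mathbb{P}(V)$ (which holds since $\phi^p=I$ on $V$). This buys a concrete, citation-free argument at the cost of a slightly longer case analysis; the paper's version is shorter but leans on the quoted results from Fesenko--Vostrikov. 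One small caveat, shared with the paper's own proof: both arguments implicitly assume $b\neq0$ (separability of $A$) in the forward direction -- if $a=b=0$ the splitting field is trivially a $p$-extension yet neither stated condition holds -- but this degenerate case never arises where the proposition is applied.
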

\begin{proof}
  If the Galois group is a $p$-group then its orbits have cardinality divisible by $p$ and the
  action on the roots of $A(Y)$ should clearly have some fixed point other than $0$,
  $\eta\in\kk_K^\times$ say. If $\beta=\eta^{p-1}$ then the roots of $Y^p-\beta{}Y$ are roots of
  $A(Y)$, so by \cite[Chap. 5, \S2, Prop. 2.5]{fesenko2002local} $A(Y)$ is $B(Y^p-\beta{}Y)$ for
  some additive polynomial $B(Y)$ which has to be monic too, $B(Y)=Y^p-\alpha{}Y$ say. The roots of
  $B(Y)$ have to be in $\kk_K$ or it, and hence $A(Y)$, would generate an extension of order prime
  with $p$, and consequently $\alpha$ has to be in $(\kk_K^\times)^{p-1}$, and $b=\alpha\beta$ as
  well. On the other hand if a root $\eta$ is in $\kk_K$ we can write $A(Y)=B(Y^p-\beta{}Y)$ for
  $\beta=\eta^{p-1}$, and replacing $Y$ by $\eta{}Z$ we can consider $B(\eta^p(Z^p-Z))$, and the
  extension is obtained as an Artin-Schreier extension over the extension determined by
  $B(Y)$. Consequently we only need the extension determined by $B(Y)$ to be trivial, which is the
  case when $b\in(\kk_K^\times)^{p-1}$.
\end{proof}

\subsection{Sum of roots of the unity}
Let $\zeta_\ell$ be a primitive $\ell$-th root of the unity for some $\ell$, we define for each
tuple $\lambda=(\lambda_1,\lambda_2,\dots,\lambda_r)$ of $r$ integers the sum
\[
    \Sigma_\lambda(\ell) = \sum_{\iota=(\iota_1,\dots,\iota_r)} 
       \zeta_\ell^{\iota_1\lambda_1+\iota_2\lambda_2+\dots+\iota_r\lambda_r},
\]
where the sum ranges over all the $r$-tuples $\iota=(\iota_1,\dots,\iota_r)$ such that
$0\leq\iota_i\leq\ell-1$ for each $i$, and the $\iota_i$ are all distinct.

We deduce some property of the sums $\Sigma_\lambda(\ell)$ to help expanding the expressions that
will appear. For each $\lambda=(\lambda_1,\lambda_2,\dots)$ and integer $k$ put $k\lambda$ for the
partition $(k\lambda_1,k\lambda_2,\dots)$. For integers $\ell,k,m$ let's define the functions
\[
   \delta^{[m]}_{\ell,k} =
     \begin{cases}
        \ell & \text{ if }\ell\geq{}m\text{ and }\ell\mid{}k, \\
        0    & \text{in any other case,}
     \end{cases}
\]
and put $\delta_{\ell,k}=\delta^{[1]}_{\ell,k}$ for short. Then we have

\begin{prop}
  \label{prop_sums}
  Assume $(\ell,p)=1$, $\ell>1$. For any partition $\lambda$ we have
  $\Sigma_{p\lambda}(\ell)=\Sigma_{\lambda}(\ell)$. For $k\geq1$ we have
  $\Sigma_{(k)}(\ell)=\delta_{\ell,k}$, and $\Sigma_{(k,1)}(\ell)=\delta^{[2]}_{\ell,k+1}$, and if
  $(k,p)=1$ we also have $\Sigma_{(k,p)}(\ell)=\delta^{[2]}_{\ell,k+p}$ and
  $\Sigma_{(k,p^2)}(\ell)=\delta^{[2]}{\ell,k+p^2}$. We also have
  $\Sigma_{(1,1,1)}(\ell)=\delta^{[3]}_{\ell,3}$, $\Sigma_{(p,1,1)}(\ell)=\delta^{[3]}_{\ell,p+2}$ and
  $\Sigma_{(p,p,1)}(\ell)=\delta^{[3]}_{\ell,2p+1}$.
\end{prop}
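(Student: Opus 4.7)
The plan is to reduce everything to the computation for single-part partitions, and then to apply inclusion-exclusion on the distinctness constraint for longer partitions. The invariance $\Sigma_{p\lambda}(\ell) = \Sigma_\lambda(\ell)$ when $(\ell,p)=1$ is a change-of-variables argument: the map $\iota_i \mapsto p\iota_i \bmod \ell$ is a bijection of $\{0,1,\dots,\ell-1\}$ preserving the condition that the $\iota_i$ be distinct, so substituting in the defining sum for $\Sigma_{p\lambda}(\ell)$ converts it term-by-term into $\Sigma_\lambda(\ell)$. The base case $\Sigma_{(k)}(\ell) = \sum_{\iota=0}^{\ell-1} \zeta_\ell^{k\iota}$ is the standard geometric series, equal to $\ell$ exactly when $\ell \mid k$ and $0$ otherwise, which is precisely $\delta_{\ell,k}$.

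For a two-part partition, the decomposition $\sum_{\iota_1 \neq \iota_2} = \sum_{\text{all}} - \sum_{\iota_1 = \iota_2}$ yields the identity
\[
  \Sigma_{(a,b)}(\ell) = \Sigma_{(a)}(\ell)\,\Sigma_{(b)}(\ell) - \Sigma_{(a+b)}(\ell).
\]
Under the standing hypothesis $(\ell,p)=1$ with $\ell>1$, each of $\delta_{\ell,1}$, $\delta_{\ell,p}$, $\delta_{\ell,p^2}$ vanishes (since $\ell \nmid 1, p, p^2$), so for the three instances $(k,1),(k,p),(k,p^2)$ the product term is zero and only the diagonal $\Sigma_{(k+b)}(\ell)$ survives, with support automatically forcing $\ell \ge 2$, matching $\delta^{[2]}_{\ell,k+b}$.

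For a three-part partition, applying the same method over the partition lattice of $\{1,2,3\}$ gives
\[
  \Sigma_{(a,b,c)}(\ell) = \Sigma_{(a)}\Sigma_{(b)}\Sigma_{(c)} - \Sigma_{(a+b)}\Sigma_{(c)} - \Sigma_{(a+c)}\Sigma_{(b)} - \Sigma_{(b+c)}\Sigma_{(a)} + 2\,\Sigma_{(a+b+c)},
\]
where the coefficient $2$ comes from the Möbius function of the coarsest partition. Applied to $(1,1,1),(p,1,1),(p,p,1)$, each intermediate $\Sigma_{(s)}(\ell)$ involves $s \in \{1,2,p,2p,p+1,p+2\}$, all of which are forced to satisfy $\ell \nmid s$ by coprimality with $p$ combined with $\ell > 2$; this leaves only the final term $\Sigma_{(a+b+c)}(\ell)$, which matches the claimed $\delta^{[3]}$-expression with the support condition $\ell \ge 3$ arising automatically from $\ell \mid a+b+c$ and $(\ell,p)=1$. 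The main obstacle is purely bookkeeping: verifying for each triple that every intermediate $\delta_{\ell,\cdot}$ vanishes, and correctly tracking the factor $2$ in the coarsest-partition contribution that is the sole surviving term.
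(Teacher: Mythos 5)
Your inclusion--exclusion over the partition lattice is the right idea --- it is precisely the specialization to $r\le 3$ of the paper's Lemma \ref{combinat}, from which the proposition is deduced --- and both identities you write down are correct, as are the invariance $\Sigma_{p\lambda}=\Sigma_\lambda$ and the geometric-series evaluation of $\Sigma_{(k)}$. The gap is in the final ``matching'' step. In the two-part case your own identity leaves $-\Sigma_{(a+b)}(\ell)=-\delta_{\ell,a+b}$, not $+\delta^{[2]}_{\ell,a+b}$: for instance $\Sigma_{(1,1)}(2)=\zeta_2^{0+1}+\zeta_2^{1+0}=-2$, whereas $\delta^{[2]}_{2,2}=+2$. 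In the three-part case the sole surviving term is $2\,\Sigma_{(a+b+c)}(\ell)=2\delta_{\ell,a+b+c}$, and the factor $2$ (the M\"obius value of the one-block partition, which you yourself isolate) does not disappear: $\Sigma_{(1,1,1)}(3)=3!\,\zeta_3^{0+1+2}=6$, not $\delta^{[3]}_{3,3}=3$. So the computation does not match the displayed formulas, and asserting that it does is a substantive error rather than bookkeeping. What your (correct) identities actually show is that the statement as printed carries a sign typo on the $\delta^{[2]}$'s and a missing factor $2$ on the $\delta^{[3]}$'s; the values $-\delta^{[2]}$ and $2\delta^{[3]}$ are the ones consistent with Lemma \ref{combinat} and with how the proposition is applied later (note the minus signs in \eqref{expa1}, and that the coefficient $\frac13\delta^{[3]}_{\ell,3}$ of $f_{p^2}^3T^{3p^2}$ in the degree-$p^3$ expansion equals $\frac{1}{3!}\Sigma_{(1,1,1)}(\ell)$ only if $\Sigma_{(1,1,1)}(\ell)=2\delta^{[3]}_{\ell,3}$). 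A complete proof must either flag and correct the statement or it fails at this last step.

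A smaller point: the cross terms in the three-part identity vanish because each contains a singleton factor $\Sigma_{(1)}(\ell)$ or $\Sigma_{(p)}(\ell)$, which is $0$ since $\ell>1$ and $(\ell,p)=1$; your appeal to ``$\ell>2$'' is not among the hypotheses and is not what makes those terms die (for $\ell=2$ one has $\ell\mid 2$ and $\ell\mid 2p$, yet the terms still vanish through the singleton factors).
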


The proof can be obtained via an easy computation, but we omit it being also an immediate
consequence of Lemma \ref{combinat} proved in the last section.

\section{Polynomials of degree $p^2$ generating a cyclic extension}

Since the different $f'(\pi)$ has valuation $3p^2-p-2$ it must come from a term $f_{p+1}X^{p^2-p-1}$
with $v_p(f_{p+1})=2$, and we must have $v_p(f_i)\geq2$ for all $(i,p)=1$, and $v_p(f_i)\geq3$ if
furthermore $i>p+1$.

Since the first jump is at $1$, the $p$-th coefficient of the ramification polynomial $f(X+\pi)$
needs to have valuation exactly equal to $(p^2-p)\cdot2=2p^2-2p$, and observe that a monomial
$f_{p^2-i}(X+\pi)^i$ contributes at most one term $\binom{i}{p}f_{p^2-i}\pi^{i-p}X^p$ in $X^p$. The
valuations of these terms have different remainders modulo the degree $p^2$, and consequently the
minimal valuation of the $\binom{i}{p}f_{p^2-i}\pi^{i-p}$ has to be $2p^2-2p$, is achieved for
$i=p^2-p$ and we must have $v_p(f_p)=1$, while $v_p(f_{pk})\geq2$ for all $2\leq{}k\leq{}p-1$.

So we have to respect the following

\begin{condition}
We must have
\begin{itemize}
\item $v_p(f_{p})=1$, and $v_p(f_{pi})\geq2$ for $i\in\I{2,p-1}$,
\item $v_p(f_{i})\geq2$ for $i\in\I{1,p-1}$, $v(f_{p+1})=2$ and $v_p(f_{i})\geq3$ for
  $i\in\I{p+2,p^2-1}$.
\end{itemize}
\end{condition}

In other words, turning to $0$ all the $f_i$ divisible by $p^3$ for $i>1$, that we are allowed to
do by Prop. \ref{prop_coeff}, $f(X)$ can be written as
\begin{equation} \label{f1}
  f(X) = X^{p^2} + \underbrace{f_{p}X^{p^2-p} +\phantom{\bigg|}\! f_{p^2}}_{\substack{\Din\\p\cO[X]}}
    + \underbrace{\sum_{j\in\I{2,p-1}} f_{pj}X^{p^2-pj}
    + \sum_{k\in\I{1,p+1}} f_{k} X^{p^2-k}}_{\substack{\Din\\p^2\cO[X]}}.
\end{equation}

Suppose now $L$ to be an arbitrary extension determined by a root $\pi$ of the polynomial $f(X)$, by
local class field theory it is a totally ramified abelian extension precisely when
$N_{L/K}(L^\times)\cap{}U_{1,K}=N_{L/K}(U_{1,L})$ has index $p^2$ in $U_{1,K}$ and the corresponding
quotient is cyclic.

Being $U_{i+1,K}=U_{1,K}^{p^{i}}$ for $i\geq1$, to have a cyclic extension $N_{L/K}(U_{1,L})U_{2,K}$
shall have index $p$ in $U_{1,K}$, and $N_{L/K}(U_{1,L})\cap{}U_{2,K}$ index $p$ in $U_{2,K}$.

Let's recall that for each $i\geq0$ we have a natural map
$\times{}p:\sfr{\kp^i_K}{\kp^{i+1}_K}\rightarrow\sfr{\kp^{i+1}_K}{\kp^{i+2}_K}$ induced by
multiplication by $p$, and for $i\geq1$ being
$(1+\theta{}p^i)^p\equiv1+\theta{}p^{i+1}+\cO(p^{i+2})$ we have a natural map
$\Ppow:\sfr{U_{i,K}}{U_{i+1,K}}\rightarrow\sfr{U_{i+1,K}}{U_{i+2,K}}$ induced by
  taking $p$-th powers and a commutative diagram
\begin{equation} \label{p_maps}
\begin{array}{c}
\xymatrix@!0@=25pt{
\sfr{\kp^i_K}{\kp^{i+1}_K} \ar@{->}[rrr]^{\times p}
       \ar@{->}[dd]_{\mu_i}  & & & \sfr{\kp^{i+1}_K}{\kp^{i+2}_K}\ar@{->}[dd]_{\mu_{i+1}} \\ \\  
 \sfr{U_{i,K}}{U_{i+1,K}} \ar@{->}[rrr]^{\Ppow}  & & & \sfr{U_{i+1,K}}{U_{i+2,K}}
}\end{array}
\end{equation}
where $\mu_i$ is induced by $x\mapsto1+x$.

Since $N_{L/K}(U_{1,L})\cap{}U_{2,K}$ will certainly contain $N_{L/K}(U_{1,L})^pU_{3,K}$ which has
index $p$ in $U_{2,K}$ and consequently has to be equal, for $L/K$ to be Galois cyclic we need
\begin{equation} \label{normc}
   N_{L/K}(U_{1,L})\subseteq{}1+pV,\qquad N_{L/K}(U_{1,L})\cap{}U_{2,K}\subseteq{}1+p^2V
 \end{equation}
 for some $\F_p$-subspace $V$ of $\sfr{\cO_K}{\kp_K}$ of codimension $1$. Note that $V$ is uniquely
 determined by $N_{L/K}(U_{1,L})U_{2,K}$ as a subgroup of $U_{1,L}$, we commit the abuse of denoting
 also as $V$ its preimage in $\cO_L$, and the meaning of the expressions $1+p^iV$ should be clear.

 If $i\geq1$ then $N_{L/K}(U_{i+1,L})\subseteq{}U_{\phi_{L/K}(i)+1,K}$ (see \cite[Chap. 3, \S3.3\
 and \S3.4]{fesenko2002local}), and in our case we have $N_{L/K}(U_{2,L})\subseteq{}U_{2,K}$ and
 $N_{L/K}(U_{p+2,L})\subseteq{}U_{3,K}$.

 Consequently we can prove that $L/K$ is Galois by showing that the norms of elements whose images
 generate $\sfr{U_{1,L}}{U_{2,L}}$ are contained in $1+pV$ for some $V$, and that for any $x$
 obtained as combination of a set of elements whose images generate $\sfr{U_{1,L}}{U_{p+2,L}}$ and
 such that $N_{L/K}(x)\in{}U_{2,K}$ we actually have $N_{L/K}(x)\in{}1+p^2V$. We will take as
 generators the elements of the form $(1-\theta\pi^\ell)$ for $\ell\in\I{1,p+1}$, plus those of the
 forms $(1-\theta\pi)^p$ for $\theta$ in the set of multiplicative representative. Those of the form
 $(1-\theta\pi)^p$ can be discarded considering that we are already requesting
 $N_{L/K}(1-\theta\pi)\in1+pV$, so their norm will certainly be in $1+p^2V$.

The norm of an element of the form $1-\theta\pi^\ell$ can be expressed as
\[
   N_{L/K}(1-\theta\pi^\ell) = \prod_{\pi_i \mid f(\pi_i)=0}(1-\theta\pi_i^\ell) 
  = \Res_X(1-\theta{}X^\ell, f(X)),
\]
where $\pi_i$ are the roots of $f(X)$ and we denote by $\Res_X$ the resultant in $X$.

For a polynomial $a(X)$ of degree $d$ let's denote by $\tilde{a}(X)$ the conjugate polynomial
$X^da(X^{-1})$, then for each pair of polynomials $a(X), b(X)$ we have
$Res_X(a(X),b(X))=Res_X(\tilde{b}(X),\tilde{a}(X))$. 

Consequently $N_{L/K}(1-\theta\pi^\ell)$ can also we written as
\[
\Res_X(\tilde{f}(X),X^\ell-\theta) 
  = \prod_{i=0}^{\ell-1} \tilde{f}(\zeta_{\ell}^i\theta^{\sfr{1}{\ell}})
\]
for some primitive $\ell$-th root of the unity. In the expansion of the second term only integral
powers of $\theta$ appear being invariant under the substitution
$\theta^{\sfr{1}{p}}\rightarrow\zeta_{\ell}\theta^{\sfr{1}{\ell}}$. In the same way while the terms
in the right hand side live in $K(\zeta_\ell)$ the result always lives in $K$, and the above
expansion should be rather considered as a combinatorial expedient.

Put $T=\theta^{\sfr{1}{\ell}}$ and consider it as an indeterminate, from the expression for
$f(X)$ in the \eqref{f1} we have that $N_{L/K}(1-\theta\pi^\ell)$ is
\begin{equation*}
  \prod_{i=0}^{\ell-1} \bigg(1 + \underbrace{f_{p}\zeta_\ell^{ip}T^p 
        +\phantom{\bigg|}\! f_{p^2}\zeta_\ell^{ip^2}T^{p^2} }_{\substack{\Din\\\kp_K}}
    + \underbrace{\sum_{j\in\I{2,p-1}} f_{pj}\zeta_\ell^{ipj}T^{pj}
    + \sum_{k\in\I{1,p+1}} f_{k} \zeta_\ell^{ik}T^{k}}_{\substack{\Din\\\kp_K^2}} \bigg).
\end{equation*}

For each tuple $\lambda=(\lambda_1,\lambda_2,\dots,\lambda_r)$ of $r$ integers put
$f_{\lambda}T^{|\lambda|}$ for the term $\prod_{i=1}^rf_{\lambda_i}T^{\lambda_i}$ of the expansion,
its coefficient is computed over all the ways we can partitioning the
$1,\zeta_\ell,\zeta_\ell^2,\dots$ in sets of cardinality $m_k(\lambda)$, the number of parts
$\lambda_i$ equal to $k$, for all $k$. Computing the ratio to the ordered choices of $r$ distinct
elements, which is the collection over which we are iterating while computing
$\Sigma_\lambda(\ell)$, we have that the coefficient of $f_\lambda{}T^{|\lambda|}$ in the expansion
is $\frac{1}{\prod_{k\geq1}m_k(\lambda)!}\cdot\Sigma_\lambda(\ell)$.

In particular we have that discarding the terms with valuation $\geq3$ and subtracting $1$ the above
product can be expanded modulo $p^3$ as

\begin{align*}
  \kp_K\ni &\left[\qquad
\begin{gathered}
  \Sigma_{(p)}(\ell)\cdot{}f_{p}T^p + \Sigma_{(p^2)}(\ell)\cdot{}f_{p^2}T^{p^2}
  \end{gathered}
\right. \\
  \kp_K^2\ni &\left[
  \begin{gathered}
  +\frac{1}{2}\Sigma_{(p,p)}(\ell)\cdot{}f_{p}^2T^{2p}
  +\Sigma_{(p^2,p)}(\ell)\cdot{}f_{p}f_{p^2}T^{p^2+p}
  +\frac{1}{2}\Sigma_{(p^2,p^2)}(\ell)\cdot{}f_{p^2}^2T^{2p^2} \\
  +\sum_{j\in\I{2,p-1}} \left( \Sigma_{(pj)}(\ell)\cdot f_{pj}T^{pj} \right)
  +  \sum_{k\in\I{1,p+1}} \left( \Sigma_{(k)}(\ell)\cdot f_{k} T^{k}\right),
  \end{gathered}\right.
\end{align*}
which applying Prop. \ref{prop_sums} can be rewritten as

\begin{equation}
 \label{expa1}
\begin{aligned}
  \kp_K\ni &\left[\qquad
\begin{gathered}
  \qquad\delta_{\ell,1}f_{p}T^p + \delta_{\ell,1}f_{p^2}T^{p^2}
  \end{gathered}
\right. \\
  \kp_K^2\ni &\left[
  \begin{gathered}
  -\frac{1}{2}\delta^{[2]}_{\ell,2}f_{p}^2T^{2p}
  -\delta^{[2]}_{\ell,p+1}f_{p}f_{p^2}T^{p^2+p}
  -\frac{1}{2}\delta^{[2]}_{\ell,2}f_{p^2}^2T^{2p^2} \\
  +\sum_{j\in\I{2,p-1}} \delta_{\ell,j}f_{pj}T^{pj}
  +  \sum_{k\in\I{1,p+1}} \delta_{\ell,k} f_{k} T^{k}.
  \end{gathered}\right.
\end{aligned}
\end{equation}

The expansion for $\ell=1$ and modulo $p^2$ tells us that the norms in $U_{1,K}$ are of the form
$1+f_pT^{p}+f_{p^2}T^{p^2}+\cO(p^2)$ for some $T$, consequently put $F_p=\overline{\sfr{f_{p}}{p}}$,
$F_{p^2}=\overline{\sfr{f_{p^2}}{p}}$ and consider the additive polynomial
\begin{equation} \label{cond1}
   A(Y)= F_{p^2}Y^p+F_pY
\end{equation}
over the residue field. It defines a linear function, if $V$ is the range $A(\kk_K)$ then
$N_{L/K}(U_{1,L})U_{2,K}$ is contained in $1+pV$, and $V$ has codimension $1$ precisely when the map
defined by $A$ has a kernel of dimension $1$, that is when $-\sfr{F_p}{F_{p^2}}$ is a $(p-1)$-th
power (if $K=\Q_p$ we shall have $V=0$ and the condition is $F_{p^2}=-F_p$).

\begin{condition}
  We must have $-\sfr{F_p}{F_{p^2}}\in{}\kk_K^{p-1}$.
\end{condition}

Now for $\ell\geq2$ the first part of the expansion \eqref{expa1} is $0$, while the remaining part
shall be contained in $p^2V$ for each $\ell$ and each specialization of $T^\ell=\theta$.

Note that we only consider the $\ell$ prime with $p$, and all the $\delta^{[m]}_{\ell,i}$ appearing
for some $m,i$ have been reduced with no loss of generality to have $(i,p)=1$.  Consequently for
$\ell\geq2$ the expansion can be written as a polynomial
$C_\ell(T^\ell)=\ell\sum_{(k,p)=1}c_{k\ell}(T^{k\ell})$, where for each $\ell$ prime with $p$ we
denote by $c_\ell(T^\ell)$ the polynomial of $T^\ell$ obtained by the evaluating equation
\eqref{expa1}, but changing the definition of $\delta^{[m]}_{a,b}$ to be $1$ if $a=b$, and $0$ if
$a\neq{}b$.

Fix $\ell$, then $c_\ell(T^\ell)$ can be obtained via a sort of M\"obius inversion
\begin{align*}
   \sum_{(k,p)=1} \mu(k) \frac{C_{k\ell}(T^{k\ell})}{k\ell}
    &= \sum_{(k,p)=1}\left( \mu(k) \cdot \sum_{(j,p)=1} c_{jk\ell}(T^{jk\ell}) \right) \\
    &= \sum_{(i,p)=1} \left(c_{i\ell}(T^{i\ell})\cdot \sum_{k\mid{}i}\mu(k) \right)
\end{align*}
by change of variable $i=jk$, obtaining $c_\ell(T^\ell)$ by the properties of the M\"obius function
$\mu$. In view of the isomorphism $\sfr{\kp^2_K}{\kp^3_K}\rightarrow\sfr{U_{2,K}}{U_{3,K}}$ induced
by $x\mapsto{}1+x$ and specializing the argument $T^{k\ell}$ of $C_{k\ell}(T^{k\ell})$ to
$\theta'=\frac{1}{\ell}\theta^k$ we have that
\[
  1+\frac{1}{k\ell}C_{k\ell}(\theta^k)\equiv 
   N_{L/K}\left(1-\frac{1}{\ell}\theta^k\pi^{k\ell}\right)^{\sfr{1}{k}} \imod{p^3},
\]
for each $\ell,k$ prime with $p$ and each $\theta=T^\ell$. Consequently $1+c_\ell(\theta)$ is
congruent modulo $p^3$ to the norm of
\[
   \prod_{(k,p)=1}\left(1-\frac{1}{\ell}\theta^k\pi^{k\ell}\right)^{\sfr{\mu(k)}{k}} = E(\theta\pi^\ell),
\]
where $E(x)$ is the Artin-Hasse exponential function (in its original form, according to
\cite[Chap. 3, \S9.1]{fesenko2002local}). Note that we can equivalently require that all the
$N_{L/K}(E(\theta\pi^\ell))$ are in $1+p^2V$, for $\ell\in\I{2,p+1}$ and residue representative
$\theta\in{}K$.

Put $A_\ell(Y)=\overline{\sfr{c_\ell(Y)}{p^2}}$, we obtain depending on $\ell$ the additive
polynomials
\begin{gather*}
   \qquad\qquad -F_pF_{p^2}Y^p + G_{p+1}Y \qquad\qquad\qquad\qquad \ell=p+1, \\
  \qquad\qquad\qquad G_{p\ell}Y^p + G_\ell Y   \qquad \qquad\qquad\qquad \ell\in\I{3, p-1},\\
  -\frac{1}{2}F_{p^2}^2Y^{p^2}+\left(G_{2p}-\frac{1}{2}F_p^2\right)Y^p+G_{2}Y \qquad \ell=2,
\end{gather*}
where for convenience we have put $G_i=\overline{\sfr{f_i}{p^2}}$ for each $i\neq{}p,p^2$. 

Hence we have obtained the

\begin{condition}
  For each $\ell\in\I{2,p+1}$ we shall have $A_\ell(\kk_K)\subseteq{}A(\kk_K)$.
\end{condition}

We are left to ensure that norms are in $1+p^2V$ for $\ell=1$ with $T=\theta$ for some $\theta$ such
that $\theta^{p^2-p}\equiv-\sfr{f_p}{f_{p^2}}\imod{p}$. Consider again the \eqref{expa1},
considering the definition of the $c_k(T)$ we have that $\sum_{(k,p)=1}c_{k}(T^{k})$ differs from
$C_1(T)=N_{L/K}(1-T\pi)-1$ by the extra term
\[
- \frac{1}{2}f_{p}^2T^{2p} - f_{p}f_{p^2}T^{p^2+p}
   - \frac{1}{2}f_{p^2}^2T^{2p^2} = -\frac{1}{2}\left(f_{p}T^{p} + f_{p^2}T^{p^2}\right)^2,
\]
which is however even contained in $\kp^4$ for $T=\theta$. Since we already required the polynomials
$c_k(Y^{k})$ to take values in $p^2V$ identically for $k\geq2$, our requirement becomes that
\begin{equation*}
  c_1(\theta) = f_{p^2}\theta^{p^2} + f_{p}\theta^{p} + f_{1}\theta
\end{equation*}
shall be contained in $p^2V$ too. Hence we have the

\begin{condition}
  Let $\theta$ be such that $\theta^{p^2-p}\equiv-\sfr{f_p}{f_{p^2}}\imod{p}$, then we must have
  $\overline{\sfr{c_1(\theta)}{p^2}}\in{}V$.
\end{condition}

Collecting all the above conditions and applying Prop. \ref{prop_add} to obtain conditions on the
coefficients we have the following theorem:

\begin{theo}
\label{theo1}
The Eisenstein polynomial $f(X)=X^{p^2}+f_1X^{p^2-1}+\dots+f_{p^2-1}X+f_{p^2}$ determines a Galois
extension of degree $p^2$ over $K$ if and only if
\begin{enumerate}[leftmargin=2.5em]
\item\label{tcr1} $v_p(f_{p})=1$, and $v_p(f_{pi})\geq2$ for $i\in\I{2,p-1}$,
\item\label{tcr2} $v_p(f_{i})\geq2$ for $i\in\I{1,p-1}$, $v(f_{p+1})=2$ and $v_p(f_{i})\geq3$ for
  $i\in\I{p+2,p^2-1}$,
\saveenum
\end{enumerate}
putting $F_p=\overline{\sfr{f_p}{p}}$, $F_{p^2}=\overline{\sfr{f_{p^2}}{p}}$, and
$G_i=\overline{\sfr{f_i}{p^2}}$ for all $i\neq{}p,p^2$ we have
\begin{enumerate}[leftmargin=2.5em]
\loadenum
\item\label{tcgal} $-\sfr{F_p}{F_{p^2}}\in{}\kk_K^{p-1}$,
\item\label{tcpp1} $G_{p+1}^p=-F_p^{p+1}$,
\item\label{tcell} $G_{p\ell}=F_{p^2}\left(\sfr{G_\ell}{F_p}\right)^p$, for all $\ell\in\I{3,p-1}$,
\item\label{tc2} $G_{2p}=F_{p^2}\left(\sfr{G_2}{F_p}\right)^p+\frac{1}{2}F_p\left(F_p-F_{p^2}^{\sfr{1}{p}}\right)$,
\saveenum
\end{enumerate}
if $\theta$ is such that $\bar\theta^{p(p-1)}=-\sfr{F_p}{F_{p^2}}$ we have that (for any choice of $\theta$)
\begin{enumerate}[leftmargin=2.5em]
\loadenum
\item\label{tc1}
  $F_{p^2}X^p+F_pX-\overline{\frac{1}{p^2}\left(f_{p^2}\theta^{p^2}+f_{p}\theta^{p}\right)}-G_1\bar\theta$
  has a root in $\kk_K$.
\end{enumerate}
\end{theo}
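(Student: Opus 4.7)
The proof consolidates the discussion preceding the theorem. I would begin by remarking that the valuation conditions (\ref{tcr1}) and (\ref{tcr2}) have already been derived as necessary from Prop.~\ref{prop_coeff} and the ramification computation at the top of the section, and that under these conditions a Galois totally ramified extension of degree $p^2$ over unramified $K$ is automatically cyclic, since a $C_p\times C_p$ extension would produce a different value of $v_L(\sD_{L/K})$, incompatible with $3p^2-p-2$. Granted (\ref{tcr1}) and (\ref{tcr2}), local class field theory reduces being Galois cyclic to the pair of norm inclusions \eqref{normc} for some codimension-one $\F_p$-subspace $V\subseteq\cO_K/\kp_K$, and the preceding discussion has already reformulated \eqref{normc} as (a) the statement that $V=A(\kk_K)$ with $A(Y)=F_{p^2}Y^p+F_pY$ has codimension one, (b) the range-inclusions $A_\ell(\kk_K)\subseteq A(\kk_K)$ for each $\ell\in\I{2,p+1}$, and (c) the membership $\overline{c_1(\theta)/p^2}\in V$ for an admissible $\theta$.

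The remaining task is to convert (a)--(c) into the explicit coefficient conditions in the theorem. Statement (a) is equivalent to $A$ having nontrivial kernel, i.e.\ to $-F_p/F_{p^2}$ being a $(p-1)$-st power in $\kk_K^\times$, giving (\ref{tcgal}). For (b) I would apply Prop.~\ref{prop_add} to each $A_\ell$: the first bullet applied to $A_{p+1}(Y)=-F_pF_{p^2}Y^p+G_{p+1}Y$ yields $-F_pF_{p^2}=F_{p^2}(G_{p+1}/F_p)^p$, which rearranges to (\ref{tcpp1}); the first bullet applied to $A_\ell(Y)=G_{p\ell}Y^p+G_\ell Y$ for $\ell\in\I{3,p-1}$ yields (\ref{tcell}); and the second bullet applied to $A_2$, which has degree $p^2$, yields, after using $(-\frac{1}{2})^{1/p}=-\frac{1}{2}$ in characteristic $p$ (recall $p\neq 2$), the identity $G_{2p}=F_{p^2}(G_2/F_p)^p+\frac{1}{2}F_p(F_p-F_{p^2}^{1/p})$, i.e.\ (\ref{tc2}).

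For (c), the constraint $\bar\theta^{p(p-1)}=-F_p/F_{p^2}$, which uses (\ref{tcgal}), makes the combination $f_{p^2}\theta^{p^2}+f_p\theta^p=\theta^p(f_{p^2}\theta^{p^2-p}+f_p)$ lie in $p^2\cO_K$, so $\overline{c_1(\theta)/p^2}$ is well defined and equals $\overline{(f_{p^2}\theta^{p^2}+f_p\theta^p)/p^2}+G_1\bar\theta$; the requirement that this lie in $A(\kk_K)$ is exactly condition (\ref{tc1}). The ``for any choice of $\theta$'' clause then follows from the observation that any two admissible residues $\bar\theta$ differ by an element of $\F_p^\times$ (since $(\bar\theta_1/\bar\theta_2)^{p(p-1)}=1$ forces $\bar\theta_1/\bar\theta_2\in\F_p^\times$ as $x\mapsto x^p$ is bijective on $\kk_K^\times$), and that under such a substitution $\overline{c_1(\theta)/p^2}$ rescales by the same element of $\F_p^\times$ (using that Teichm\"uller lifts $\zeta$ of elements of $\F_p^\times$ satisfy $\zeta^p=\zeta^{p^2}=\zeta$ in $\cO_K$), which preserves membership in the $\F_p$-subspace $V$.

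The step I expect to require the most care is the $\ell=2$ case: the polynomial $A_2$ acquires a genuine $Y^{p^2}$ term from the cross contribution $\frac{1}{2}\Sigma_{(p^2,p^2)}(2)f_{p^2}^2T^{2p^2}$ in the expansion \eqref{expa1}, and one must carefully track the signs, the factor $\frac{1}{2}$, and the non-trivial $p$-th root appearing in the second bullet of Prop.~\ref{prop_add} in order to reach (\ref{tc2}) in the stated form.
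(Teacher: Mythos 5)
Your proposal is correct and takes essentially the same route as the paper: the paper's proof of Theorem~\ref{theo1} consists exactly of the preceding derivation of the norm conditions \eqref{normc} via the resultant/Artin--Hasse expansion, followed by the application of Prop.~\ref{prop_add}, which the paper leaves implicit and you carry out explicitly. Your computations for conditions (\ref{tcpp1})--(\ref{tc1}) — including the sign bookkeeping and $(-\tfrac{1}{2})^{\sfr{1}{p}}=-\tfrac{1}{2}$ in the $\ell=2$ case, and the observation that admissible residues $\bar\theta$ differ by elements of $\F_p^\times$ so that membership in the $\F_p$-subspace $V$ is unaffected — all check out.
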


\section{Polynomials of degree $p^2$ whose Galois group is a $p$-group}

In the proof of Theorem \ref{theo1} we obtained a list of requirements on an Eisenstein equation of
degree $p^2$ that guarantee that the generated extension is Galois cyclic over $K$. Let's keep the
hypotheses on the ramification numbers (and consequently conditions \ref{tcr1} and \ref{tcr2} of the
Theorem), it is a natural question to describe the Galois group of the normal closure when some of
these hypotheses are not satisfied.

Put $L=K(\pi)$, we will also keep the condition \ref{tcgal}, which we can see immediately to be
satisfied if and only if $L$ contains a Galois extension of degree $p$ of $K$, which is a necessary
condition for the normal closure of $L/K$ to be a $p$-group. Note that this hypothesis is always
satisfied for $f(X)$ if $K$ is replaced with a suitable unramified extension.

We can notice from the proof that the first unsatisfied requirement among the conditions
\ref{tcpp1}, \ref{tcell} with $\ell$ as big as possible, \ref{tc2} and \ref{tc1} in Theorem
\ref{theo1} gives us information about the biggest possible $\ell$ such that
$N_{L/K}(U_{\ell,L})\cap{}U_{2,K}$ is not contained in $1+p^2V$, with $V$ defined as in the
proof. We expect this fact to allow us to deduce information about the Galois group of the normal
closure.

Let $F$ be the Galois extension of degree $p$ of $K$ contained in $L$, it is unique or $\Gal(L/K)$
would be elementary abelian, which is not possible considering the ramification breaks. Then $F/K$
has ramification number $1$ and $L/F$ ramification number $p+1$. 

Before continuing we prove a proposition that will also be of use later, for which we only assume
$L/F$ to have ramification break $>1$ and $F/K$ to be Galois with break at $1$, with group generated
by $\sigma$ say. Similarly we have that the only $f_i$ with $v_p(f_i)=1$ are $f_p$ and
$f_{p^2}$. For some $\theta\in{}K$ we can write
\begin{align*}
   \pi_F^{(\sigma-1)} &= 1-\theta^p\pi_F+\cO(\pi_F^2)\\
     &= N_{L/F}(1-\theta\pi)+\cO(\pi_F^2)
 \end{align*}
in view of \cite[Chap. 3, \S1, Prop. 1.5]{fesenko2002local} and having $L/F$ ramification break
 $>1$. Since $\pi_F^{(\sigma-1)}$ is killed by $N_{F/K}$ and $N_{F/K}(U_{2,F})\subseteq{}U_{2,K}$
 we should have $N_{L/K}(1-\theta\pi)\in{}U_{2,K}$ and hence
 $\bar\theta^{p(p-1)}=-\sfr{F_p}{F_{p^2}}$, as we have expanding
 $\tilde{f}(\theta)=N_{L/F}(1-\theta\pi)$ like in the proof of Theorem \ref{theo1}.

We obtain inductively that
\begin{prop}
\label{prop_shift}
For each $1\leq\ell<p$ we have 
\[
   \pi_F^{(\sigma-1)^\ell}=1-k\theta^{p\ell}\pi_F^\ell+\cO(\pi_F^{\ell+1}),
\]
for some integer $k$ prime with $p$, where $\bar\theta^{p(p-1)}=-\sfr{F_p}{F_{p^2}}$.
\end{prop}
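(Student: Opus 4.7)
The plan is to proceed by induction on $\ell$, the case $\ell = 1$ being exactly the identity displayed just above the statement, which yields $k = 1$. For the inductive step set $u_\ell := \pi_F^{(\sigma-1)^\ell}$, so that $\pi_F^{(\sigma-1)^{\ell+1}} = \sigma(u_\ell)/u_\ell$. Since $u_\ell \equiv 1 \pmod{\pi_F}$, dividing by $u_\ell$ introduces only a multiplicative error of order $\cO(\pi_F)$ on top of the numerator $\sigma(u_\ell) - u_\ell \in \cO(\pi_F^{\ell+1})$, so the whole task reduces to computing $\sigma(u_\ell) - u_\ell$ modulo $\pi_F^{\ell+2}$.

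Two properties of $\sigma$ drive the calculation. Since $F/K$ is totally ramified, $\sigma$ fixes the residue field of $F$, so $\sigma(a) \equiv a \pmod{\pi_F}$ for every $a \in \cO_F$. Since the ramification break of $F/K$ equals $1$, we have $v_F(\sigma(\pi_F) - \pi_F) = 2$, and the base case pins down the quotient as $\sigma(\pi_F)/\pi_F = 1 - \theta^p \pi_F + \cO(\pi_F^2)$. Combining the two facts, a direct expansion gives $(\sigma - 1)(a\pi_F^m) \in \cO(\pi_F^{m+1})$ for every $a \in \cO_F$ and every $m \geq 1$, so the entire tail $\cO(\pi_F^{\ell+1})$ of $u_\ell$ contributes only $\cO(\pi_F^{\ell+2})$ to $\sigma(u_\ell) - u_\ell$ and may be ignored.

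It therefore suffices to push the leading term $-k\theta^{p\ell}\pi_F^\ell$ through: from $\sigma(\pi_F)^\ell - \pi_F^\ell = \pi_F^\ell\bigl[(1 - \theta^p\pi_F + \cO(\pi_F^2))^\ell - 1\bigr] = -\ell\theta^p\pi_F^{\ell+1} + \cO(\pi_F^{\ell+2})$ one obtains $\sigma(u_\ell) - u_\ell = k\ell\,\theta^{p(\ell+1)}\pi_F^{\ell+1} + \cO(\pi_F^{\ell+2})$. Thus the new leading coefficient has the form $-k'\theta^{p(\ell+1)}$ with $k' = -k\ell$, and $k'$ is coprime to $p$ because $k$ is by the inductive hypothesis and $1 \leq \ell < p$; this is precisely where the restriction $\ell < p$ enters. (Incidentally the induction produces $k_\ell = (-1)^{\ell-1}(\ell-1)!$, but only coprimality with $p$ is needed.)

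The only delicate point is the error-term bookkeeping in the second paragraph, i.e.\ verifying that neither a $\sigma(a) - a$ contribution nor a $\sigma(\pi_F^m) - \pi_F^m$ contribution can leave a residue at level $\pi_F^{\ell+1}$. Once the two properties of $\sigma$ above are granted, however, this is a routine valuation count, so I expect no genuine obstacle.
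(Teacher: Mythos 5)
Your proof is correct and follows the same route as the paper: the paper establishes the base case $\pi_F^{(\sigma-1)}=1-\theta^p\pi_F+\cO(\pi_F^2)$ (identifying $\bar\theta$ via the norm computation) and then simply asserts ``we obtain inductively'' the general statement, and your inductive step --- writing $\pi_F^{(\sigma-1)^{\ell+1}}=\sigma(u_\ell)/u_\ell$, checking that $(\sigma-1)$ raises valuations by one so the tail is negligible, and pushing the leading term through to get $k'=-k\ell$ --- is exactly the omitted verification. The bookkeeping is sound, and the observation that $\ell<p$ is what keeps $k'$ prime to $p$ correctly pinpoints where the hypothesis enters.
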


We return to our main problem, so let $L/F$ have ramification break at $p+1$.  We require $L/F$ to
be Galois: by local class field theory this is the case precisely when the map
$\sfr{U_{p+1,L}}{U_{p+2,L}}\rightarrow\sfr{U_{p+1,F}}{U_{p+2,F}}$ induced by $N_{L/F}$ is not
surjective. Since the map $\sfr{U_{p+1,F}}{U_{p+2,F}}\rightarrow\sfr{U_{2,K}}{U_{3,K}}$ induced by
$N_{F/K}$ is an isomorphism by \cite[Chap. 3, \S1, Prop. 1.5]{fesenko2002local}, we are reduced to
study the image of $\sfr{U_{p+1,L}}{U_{p+2,L}}$ in $\sfr{U_{2,K}}{U_{3,K}}$. Considering the norms
of the usual $1+\theta\pi^{p+1}$, we have from the proof of Theorem \ref{theo1} that this map is
described by the additive polynomial $A_{p+1}(Y)$, and is non-surjective precisely when
$\sfr{G_{p+1}}{F_pF_{p^2}}$ is in $\kk_K^{p-1}$. Consequently we will always assume the

\begin{condition}
  We require $\sfr{G_{p+1}}{F_pF_{p^2}}\in\kk_K^{p-1}$.
\end{condition}

\noindent which is necessary and sufficient for the Galois closure of $L/K$ to be a $p$-group, and again is
always satisfied if we replace $K$ by a suitable unramified extension.

For an $\F_p[G]$-module $M$ we respectively denote by $\soc^iM$ and $\rad^iM$ the $i$-th socle and
radical of $M$. If $\sigma$ is a generator of $G$, the radical of $\F_p[G]$ is generated by
$\sigma-1$, and we have
\[
   \rad^i M = M^{(\sigma-1)^i},\qquad \soc^i M = \left\{ x : x^{(\sigma-1)^i} = 0 \right\}.
   \]

Let $G=\Gal(F/K)$ and $\tilde{L}$ be the Galois closure of $L$ over $K$, we want to compute the
length of $\Gal(\tilde{L}/F)$ as a $\F_p[G]$-module, which we will also show to determine completely
$\Gal(\tilde{L}/K)$ in the present case. If $F^{(p)}$ is the maximal abelian elementary
$p$-extension of $F$, this amounts to computing the smallest $m$ such that $rad^m\Gal(F^{(p)}/F)$ is
contained in $\Gal(F^{(p)}/L)$.

For $0\leq{}i\leq{}p$ let's consider the submodules $S_i=\soc^{p-i}P_F$ of $P_F=[F^\times]_F$ (which
is canonically identified with $\Gal(F^{(p)}/F)$ via local class field theory), and let $K_i$ be the
class field corresponding to $S_i$ over $F$. For $0\leq{}i<p$ we have $[U_{i+1,F}]\subseteq{}S_i$
and thus the highest upper ramification break of $\Gal(K_i/F)$ is $i$ for $i<p$, and in particular
being $p+1$ the unique ramification break of $L/F$ we have that $K_i\nsupseteq{}L$ for $i<p$. Note
also that $K_1$ is the maximal elementary abelian $p$-extension of $K$.

Let $K'$ be the field corresponding to $\rad^1P_F$, it is the maximal $p$-elementary abelian
extension of $F$ which is abelian over $K$, and it corresponds to $N_{F/K}(F^\times)^p$ via the
class field theory of $K$. Considering the structure of $P_F\cong\F_p[G]^{\oplus{}f}\oplus\F_p$ as a
Galois module we have that
\[
     rad^iP_F = \soc^{p-i}P_F \cap \rad^1P_F = S_i\cap \rad^1P_F,
\]
for each $i$, and $rad^iP_F$ corresponds to $K'K_i$ via class field theory, so we are looking for
the smallest $m$ such that $L\subset{}K'K_m$. Since $L$ and $K'$ are never contained in $K_i$ for
$i<p$ and $K'$ has degree $p$ over $K_1$, this inclusion holds if and only if $L$ and $K'$ generate
the same extension over $K_m$ (and $\tilde{L}$ will too, being $K'K_m$ Galois over $K$). This is the
case if and only if $K'\subset{}LK_m$, and this condition is consequently equivalent to the
$\F_p[G]$-module $\Gal(\tilde{L}/F)$ having length $\leq{}m$

We can now show that if $K'\subset{}LK_m$ for some $m<p$, then $\Gal(\tilde{L}/K)$ cannot be the
semidirect product: indeed $\Gal(K_{m+1}/K)$ lives in the exact sequence
\[
    1 \rightarrow \sfr{P_F}{S_{m+1}} \rightarrow \Gal(K_{m+1}/K) \rightarrow G \rightarrow 1,
\]
and all $p$-th powers in $\Gal(K_{m+1}/K)$ are clearly $G$-invariant elements of
$\sfr{P_F}{S_{m+1}}$, and hence contained in $\sfr{S_m}{S_{m+1}}$, and this shows that the quotient
$\Gal(K_m/K)$ has exponent $p$ since we quotiented out all $p$-th powers. On the other hand
$\Gal(K'/K)$ has exponent $p^2$ so if $K'\subset{}LK_m$ then also $\Gal(\tilde{L}K_m/K)$ does, and
$\Gal(\tilde{L}/K)$ should also have exponent $p^2$ or the $p$-th power of any element of the
absolute Galois group would act trivially on $\tilde{L}$, $K_m$, and consequently on $\tilde{L}K_m$,
which is impossible. Note that if $\Gal(\tilde{L}/F)$ has maximal length $m=p$ there is only one
possible isomorphism class of possible $p$-groups, which is the wreath product of two cyclic groups
of order $p$, see \cite{waterhouse1994normal,minac2005galois} for more information about these
groups.

The above observation can be viewed as the fact that, for $m<p$, $K_m$ is the compositum of all the
extensions of degree $p$ whose normal closure has group over $F$ of length $\leq{}m$ as
$\F_p[G]$-module, and whose group over $K$ is the semidirect product extension (and hence has
exponent $p$). The extensions whose group of the normal closure over $K$ is not the semidirect
product are obtained via a sort of twist with $K'$, which is non-trivial when $L\nsubseteq{}K_m$.

Now $K'$ is not contained in $LK_m$ precisely when there exist an element in
$\Gal(K^{\mathrm{ab}}/K)$ fixing $LK_m$ but not $K'$, any such element can be lifted to
$\Gal(L^{\mathrm{ab}}/L)$. Since the image of the Artin map
$\Psi_L:L^\times\rightarrow\Gal(L^{\mathrm{ab}}/L)$ is dense in $\Gal(L^{\mathrm{ab}}/L)$ we can
take such element of the form $\Psi_L(\alpha)$ for some $\alpha\in{}L^\times$. Having to fix $K_1$
we will have $N_{L/K}(\alpha)\in{}(K^\times)^p$ by the functoriality of the reciprocity map (see
\cite[Chap. 4, Theorem 4.2]{fesenko2002local}), $[N_{L/F}(\alpha)]_F\in{}S_m$ because $K_m$ is
fixed, and $N_{L/K}(\alpha)\notin{}N_{F/K}(F^\times)^p$ because the action is non-trivial on $K'$,
and on the other hand the existence of such an element ensures that $K'\notin{}LK_m$.

If $L$ and $K$ are as above, we have proved the

\begin{prop}
\label{clos1}
Let $1\leq{}m\leq{}p$ be the smallest possible integer such that for all $\alpha\in{}L^\times$ such
that $N_{L/K}(\alpha)\in(K^\times)^p$ and $[N_{L/F}(\alpha)]_F\in{}S_m$ we also have
$N_{L/K}(\alpha)\in{}N_{F/K}(F^\times)^p$. Then $\Gal(\tilde{L}/K)$ is the unique $p$-group which
has exponent $p^2$ and is an extension of $G=\Gal(F/K)$ by an indecomposable $\F_p[G]$-module of
length $m$.
\end{prop}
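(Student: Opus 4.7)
The plan is to assemble the dictionary already built up in the paragraphs preceding the statement and then add a brief module-theoretic identification of the resulting group extension.

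First I would re-examine the Artin-map translation carried out above. The length of $\Gal(\tilde L/F)$ as an $\F_p[G]$-module equals the smallest $m$ with $K'\subset LK_m$, and $K'\not\subset LK_m$ is witnessed by $\Psi_L(\alpha)\in\Gal(L^{\mathrm{ab}}/L)$ fixing $LK_m$ but not $K'$. By functoriality of reciprocity, fixing $K_1$ means $N_{L/K}(\alpha)\in(K^\times)^p$, fixing $K_m$ means $[N_{L/F}(\alpha)]_F\in S_m$, and not fixing $K'$ means $N_{L/K}(\alpha)\notin N_{F/K}(F^\times)^p$. Contrapositively, the minimal $m$ in the statement is exactly the length of $\Gal(\tilde L/F)$.

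Next I would identify $\Gal(\tilde L/F)$ as an $\F_p[G]$-module. Since $F^{(p)}/K$ is Galois and contains $L$, it contains $\tilde L$, so $\Gal(\tilde L/F)$ is an elementary abelian $p$-group. It is cyclic as an $\F_p[G]$-module, being generated by the image of any lift of a generator of $\Gal(\tilde L/L)$. The ring $\F_p[G]=\F_p[\sigma-1]/((\sigma-1)^p)$ is local, so cyclic $\F_p[G]$-modules are of the form $M_j=\F_p[G]/((\sigma-1)^j)$, in particular indecomposable, and $\Gal(\tilde L/F)\cong M_m$.

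Finally, I would argue the uniqueness of $\Gal(\tilde L/K)$: equivalence classes of extensions $1\to M_m\to\Gal(\tilde L/K)\to G\to 1$ are parametrized by $H^2(G,M_m)$, which using $N_G=(\sigma-1)^{p-1}$ in characteristic $p$ is $\F_p$ for $m<p$ and $0$ for $m=p$. For $m<p$ the split extension has exponent $p$ and is excluded by the exponent-$p^2$ fact established just before the statement, leaving exactly the non-split extension; for $m=p$ the unique extension is the wreath product $C_p\wr C_p$, which already has exponent $p^2$. The main obstacle is bookkeeping the Artin-map translation between fixation of $K_1,K_m,K'$ and the three arithmetic conditions on $\alpha$; the subsequent module identification and extension uniqueness are formal.
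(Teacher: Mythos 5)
Your proposal is correct and, in its core, coincides with the paper's own argument: the proposition is stated there precisely as a summary of the immediately preceding discussion, and your Artin-map dictionary (fixing $K_1$ $\leftrightarrow$ $N_{L/K}(\alpha)\in(K^\times)^p$, fixing $K_m$ $\leftrightarrow$ $[N_{L/F}(\alpha)]_F\in S_m$, moving $K'$ $\leftrightarrow$ $N_{L/K}(\alpha)\notin N_{F/K}(F^\times)^p$) is exactly the paper's translation of the condition $K'\subseteq LK_m$, which was shown to be equivalent to $\Gal(\tilde{L}/F)$ having length $\leq m$. Where you genuinely add something is the uniqueness clause: the paper proves directly that $\Gal(\tilde{L}/K)$ has exponent $p^2$ and is non-split for $m<p$ (by comparing the exponent-$p$ group $\Gal(K_{m+1}/K)$ with the exponent-$p^2$ group $\Gal(K'/K)$) and refers to the literature for the rest, whereas your computation $H^2(G,M_m)\cong M_m^G/N_GM_m$, equal to $\F_p$ for $m<p$ and to $0$ for $m=p$, makes the uniqueness explicit; to pass from uniqueness of the extension class to uniqueness of the \emph{group} you should add that the $p-1$ non-split classes are permuted transitively by the scalar automorphisms of $M_m$ and hence yield isomorphic groups. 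Two small repairs to your module identification: the element whose $\F_p[G]$-span you consider should lift a generator of $\Gal(L/F)$, not of $\Gal(\tilde{L}/L)$, and cyclicity (equivalently indecomposability) of $\Gal(\tilde{L}/F)$ is not automatic from exhibiting such an element --- it uses that the $G$-conjugates of $\Gal(\tilde{L}/L)$ intersect trivially, equivalently that $\soc^1\Gal(\tilde{L}/F)$ is one-dimensional, since a socle of dimension $\geq2$ would meet the index-$p$ subgroup $\Gal(\tilde{L}/L)$ and all of its conjugates in a common nonzero subgroup. Neither point affects the validity of your conclusion.
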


We now determine the $(p-m)$-th socle $S_m$ of $P_F$ for each $0\leq{}m\leq{}p$, and deduce the
ramification jumps of the normal closure.

Consider the images $V_i=[U_{i,F}]_F$ of the $U_{i,F}$ in $P_F$ for $i\geq{}1$, and put $V_0=P_F$
for convenience. If $G$ is generated by $\sigma$ say, the radical of $\F_p[G]$ is generated by
$(\sigma-1)$ and we have $V_i^{\sigma-1}\subseteq{}V_{i+1}$. Since $V_p=V_{p+1}$ and $V_{p+2}=1$ we
have that $V_p$ is killed by $\sigma-1$, $V_{p-1}$ by $(\sigma-1)^2$ and so on, so that
$V_{k+1}\subseteq{}\soc^{p-k}P_F=S_{k}$ for $0\leq{}k<p$, while clearly $S_p=0$. Furthermore if
$\pi_F$ is a uniformizing element of $F$ we have $\pi_F^{(\sigma-1)^k}\in{}V_k\setminus{}V_{k+1}$
and $\pi_F^{(\sigma-1)^{k}}\in{}S_k$ for $0\leq{}k<p$, so comparing the dimensions we have that
\[
   S_k = \langle\pi_F^{(\sigma-1)^k}\rangle + V_{k+1}.
\]

If $m$ is like in the proposition and $\geq2$, take in $L^\times$ an element $\alpha$ contradicting
the proposition for $m-1$ and such that $t=v_L(1-N_{L/F}(\alpha))$ is as big as possible. Then
$\psi_{LK_{m-1}/F}(t)$ is the ramification break of $K'LK_{m-1}/LK_{m-1}$, which is also equal to
that of $LK'K_{m-1}/K'K_{m-1}$ considering that $K'K_{m-1}/K_{m-1}$ and $LK_{m-1}/K_{m-1}$ have the
same ramification break equal to $\psi_{K_{m-1}/F}(p+1)$, and the total set of breaks has to be
preserved. By the definition of $S_{m-1}$ and $S_m$ we have that $t$ can be either $m-1$ or $m$,
unless $m=p$ where $t$ is either $p-1$ or $p+1$.

By local class field theory $K'K_{m-1}/F$ corresponds to the subgroup $A=rad^{m-1}P_F$ of $P_F$, and
$LK'K_{m-1}/F$ to another subgroup $B$ with index $p$ in $A$, and $t$ is the biggest $t$ such that
some $x\in{}V_t\cap{}A$ has non-trivial image in $A/B$. Passing to the groups $A'$ and $B'$ of the
elements sent by $\sigma-1$ into $A$ and $B$ respectively, $A'=soc^{p-m+2}P_F$ corresponds to
$K_{m-2}$, and $B'$ to $L'K_{m-2}$ where $L'$ is the subfield of $\tilde{L}$ corresponding to
$\soc^1\Gal(\tilde{L}/F)$ as $\F_p[G]$-module. The upper ramification break of the new relative
extension is $\psi_{K_{m-2}/F}(s)$ where $s$ is the biggest so that some $y\in{}V_s\cap{}A'$ is
nontrivial in $A'/B'$. Being $A=rad^{m-1}P_F$ each $x\in{}A\setminus{}B$ is of the form
$x=y^{\sigma-1}$ for some $y\in{}A'\setminus{}B'$, so $s=t-1$ unless $t=p+1$ which becomes $s=p-1$.

Since $\Gal(L'/F)$ has length $m-1$ and the field $L''$ corresponding to $\soc^1\Gal(L'/L)$ is
contained in $K_{m-2}$, and $V_{m-2}\supseteq{}A'\supseteq{}V_{m-1}$, we have that $s$ is also the
ramification number of $L'/L''$. Repeating this observation for $m-1$ steps we have that the
ramification breaks over $F$ are either $1,2,\dots,m-1,p+1$, either $0,1,\dots,m-2,p+1$ depending on
whether an element $\alpha\in{}S_{m-1}$ contradicting the proposition can be found in $V_m$ or not,
where for convenience a ``ramification break'' of $0$ indicates an unramified extension.

We proved the
\begin{prop}
\label{clos2}
Let $1\leq{}m\leq{}p$ be like in the Prop. \ref{clos1}, if we can find an $\alpha$ such that
$N_{L/K}(\alpha)\in{}(K^\times)^p\setminus{}N_{F/K}(F^\times)^p$ such that
$[N_{L/F}(\alpha)]_F\in{}V_m$, then the normal closure $\tilde{L}/F$ is totally ramified with breaks
$1,2,\dots,m-1,p+1$. If not, then $\tilde{L}/F$ is formed by an unramified extension of degree $p$
and an extension with breaks $1,2,\dots,m-2,p+1$.
\end{prop}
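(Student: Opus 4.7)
The plan is to recursively peel off the tower of degree-$p$ extensions between $F$ and $\tilde{L}$, reading off the ramification break at each step by combining local class field theory with the Herbrand $\psi$-function.

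By Prop.~\ref{clos1}, $\Gal(\tilde{L}/F)$ is an indecomposable $\F_p[G]$-module of length $m$, hence admits a filtration by socles whose successive quotients are all of dimension $1$ over $\F_p$. First I would treat the topmost degree-$p$ step: pick $\alpha\in L^\times$ contradicting Prop.~\ref{clos1} for the value $m-1$, with $t = v_F(1 - N_{L/F}(\alpha))$ as large as possible. The identity $S_{m-1} = \langle\pi_F^{(\sigma-1)^{m-1}}\rangle + V_m$ forces $t\in\{m-1,m\}$, except in the borderline case $m=p$ where $V_p=V_{p+1}$ allows $t$ to equal either $p-1$ or $p+1$. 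Identifying $K'K_{m-1}/F$ and $LK'K_{m-1}/F$ with the subgroups $A=\rad^{m-1}P_F$ and a sub-index-$p$ group $B\subset A$ via class field theory, the ramification break of the top step $LK'K_{m-1}/K'K_{m-1}$ equals $\psi_{K'K_{m-1}/F}(t)$.

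The inductive step is the key observation: for the preimages $A'=\soc^{p-m+2}P_F$ and $B'$ of $A$ and $B$ under $\sigma-1$, the map $y\mapsto y^{\sigma-1}$ is a bijection $A'\setminus B'\to A\setminus B$, so the analogous largest valuation at the next step down is $s=t-1$, except when $t=p+1$ in which case $V_p=V_{p+1}$ forces $s=p-1$. Iterating this shift $m-1$ times produces the full sequence of lower ramification breaks of $\tilde{L}/F$, which are then assembled via class field theory into the claimed list over $F$.

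The main obstacle is the bookkeeping needed to ensure that the recursive shift $s=t-1$ at each level, after conversion through the nested $\psi$-functions, produces consecutive integer breaks $1,2,\dots$ rather than something more complicated. Once this is confirmed, the dichotomy of the statement follows: when an $\alpha$ with $[N_{L/F}(\alpha)]_F\in V_m$ exists, we have $t=m$ (or $t=p+1$ when $m=p$), and the chain of breaks is $1,2,\dots,m-1,p+1$; otherwise $t=m-1$ and the first iteration of the shift hits $0$, producing an unramified degree-$p$ piece and leaving the remaining totally ramified breaks $1,2,\dots,m-2,p+1$.
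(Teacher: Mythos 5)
Your proposal is correct and follows essentially the same route as the paper: the same choice of $\alpha$ maximizing $t$, the same class field theory identification of $K'K_{m-1}$ and $LK'K_{m-1}$ with $A=\rad^{m-1}P_F$ and an index-$p$ subgroup $B$, the same descent through preimages under $\sigma-1$ giving the shift $s=t-1$ (with $t=p+1$ becoming $s=p-1$), and the same final dichotomy. The bookkeeping step you flag as an unresolved obstacle is settled in the paper by the observation that $V_{m-2}\supseteq A'\supseteq V_{m-1}$, so that $s$ is directly the ramification number of the next subextension $L'/L''$ of the tower, where $L'$ corresponds to $\soc^1\Gal(\tilde{L}/F)$ (note also that $\sigma-1$ maps $A'\setminus B'$ onto, not bijectively to, $A\setminus B$, but surjectivity is all that is needed).
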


We will look for the biggest $1\leq{}m\leq{}p-1$ such that we can find an $\alpha$ contradicting the
requests of the Prop. \ref{clos1}. For all $\ell=p-1,\dots,2,1$ in descending order, if we cannot
find a suitable $\alpha$ with $[N_{L/F}(\alpha)]_F\in{}V_{\ell+1}$, we inductively test
$S_{\ell}\supset{}V_{\ell+1}$ (and $\Gal(\tilde{L}/F)$ has length $\ell+1$ and there is an
unramified part), and then $V_{\ell}\supseteq{}S_\ell$ (and in this case $\Gal(\tilde{L}/F)$ has
length $\ell$ and the extension is totally ramified).

Verifying that we cannot find $\alpha$ with $[N_{L/F}(\alpha)]_F\in{}V_{\ell+1}$ is easy, and is the
condition of the theorem connected to $A_{p+1}(Y)$ for $\ell=p-1$, or to $A_{\ell+1}$ if
$\ell<p-1$. We then allow $[N_{L/F}(\alpha)]_F$ to be in
$S_\ell=\langle\pi_F^{(\sigma-1)^{\ell}}\rangle+V_{\ell+1}$: by Prop. \ref{prop_shift} for
$\bar\theta^{p(p-1)}=-\sfr{F_p}{F_{p^2}}$ and for some $k$ prime with $p$ we have
\begin{align*}
  \pi_F^{(\sigma-1)^\ell} &= 1-k\theta^{p\ell}\pi_F^\ell+\cO(\pi_F^{\ell+1})\\
  &= N_{L/F}(1-k\theta^\ell\pi^\ell)+\cO(\pi_F^{\ell+1}),
\end{align*}
in view of \cite[Chap. 3, \S1, Prop. 1.5]{fesenko2002local} and being $\ell$ smaller than the
ramification number $p+1$.  In particular the image of $N_{L/F}(1-\theta^\ell\pi^\ell)$ generates
$\sfr{S_\ell}{V_{\ell+1}}$, and testing the condition for $S_\ell$ is equivalent to verifying that
$A_{\ell}(\bar\theta^\ell)\in{}V$.

Note that $A_2(\bar\theta^2)$ has the simplified form $G_{2p}\bar\theta^{2p}+G_{2}\bar\theta^2$, and
testing if $F_{p^2}X^p+F_pX=A_{\ell}(\bar\theta^\ell)$ has solution in $\kk_K$ is equivalent to
checking, after replacing $X$ by $\bar\theta^\ell{}X$ and dividing by $\bar\theta^\ell$, if there
are solutions to
\[
         F_{p^2}(-\sfr{F_p}{F_{p^2}})^{\sfr{\ell}{p}}X^p + F_pX -
         G_{p\ell}(-\sfr{F_p}{F_{p^2}})^{\sfr{\ell}{p}} - G_\ell = 0.
\]
Note that for $\ell=1$ we just test if $\overline{\sfr{c_1(\theta)}{p^2}}$ is in $V$, like in the
last condition of Theorem \ref{theo1}.

We have the

\begin{theo}
\label{theo2}
Assume $f(X)$ to satisfy conditions \ref{tcr1}, \ref{tcr2}, \ref{tcgal} of Theorem \ref{theo1}, and
keeping the notation assume additionally that
\begin{enumerate}[leftmargin=2.5em]
\item $\sfr{G_{p+1}}{F_pF_{p^2}}\in\kk_K^{p-1}$.
\saveenum
\end{enumerate}
Let $L$ be the extension determined by $f(X)$, $\tilde{L}$ the normal closure over $K$, and $F$ the
unique subextension of degree $p$ contained in $L$. Then $\Gal(\tilde{L}/K)$ is an extension of
$G=\Gal(F/K)$ by the indecomposable $\F_p[G]$-module $M=\Gal(\tilde{L}/F)$, $\Gal(\tilde{L}/K)$ has
exponent $p^2$ and is a non-split extension unless $M$ has length $p$. Furthermore
\begin{enumerate}[leftmargin=2.5em]
\loadenum
\item if $G_{p+1}^p\neq-F_p^{p+1}$ then $M$ has length $p$ and $L/F$ is totally ramified with upper
  ramification breaks $1,2,\dots,p-1,p+1$;
\saveenum
\end{enumerate}
assuming equality in the previous condition, 
\begin{enumerate}[leftmargin=2.5em]
\loadenum
\item if $G_{p\ell}\neq{}F_{p^2}\left(\sfr{G_\ell}{F_p}\right)^p$ for some $\ell\in\I{3,p-1}$ that
  we take as big as possible, or
  $G_{2p}\neq{}F_{p^2}\left(\sfr{G_2}{F_p}\right)^p+\frac{1}{2}F_p\left(F_p-F_{p^2}^{\sfr{1}{p}}\right)$
  and we put $\ell=2$, let 
\[
U(X) = F_{p^2}(-\sfr{F_p}{F_{p^2}})^{\sfr{\ell}{p}}X^p + F_pX -
G_{p\ell}(-\sfr{F_p}{F_{p^2}})^{\sfr{\ell}{p}} - G_\ell.
\]
We have that
\begin{itemize}[leftmargin=1.5em]
\item if $U(X)$ has no root in $\kk_K$, then $M$ has length $\ell+1$ and $\tilde{L}/F$ is formed by an
  unramified extension followed by a totally ramified extension with upper ramification breaks
  $1,2,\dots,\ell-1,p+1$,
\item if $U(X)$ has some root in $\kk_K$, then $M$ has length $\ell$ and $\tilde{L}/F$ is a totally
  ramified extension with upper ramification breaks $1,2,\dots,\ell-1,p+1$,
\end{itemize}
\saveenum
\end{enumerate}
assuming equality in the previous conditions, and for $\bar\theta^{p(p-1)}=-\sfr{F_p}{F_{p^2}}$,
\begin{enumerate}[leftmargin=2.5em]
\loadenum
\item if
  $F_{p^2}X^p+F_pX-\overline{\frac{1}{p^2}\left(f_{p^2}\theta^{p^2}+f_{p}\theta^{p}\right)}-G_1\bar\theta$ has
  no root in $\kk_K$, then $M$ has length $2$ and $\tilde{L}/F$ is formed by an unramified extension
  followed by a totally ramified extension with upper ramification break $p+1$.
\end{enumerate}
All conditions pass precisely when all requirements of Theorem \ref{theo1} are satisfied, and in
this case $L/F$ is Galois cyclic.
\end{theo}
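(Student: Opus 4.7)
The plan is to apply the iterative procedure described after Prop.~\ref{clos1} and~\ref{clos2}, systematically matching each test in the iteration with one of the conditions derived in the proof of Theorem~\ref{theo1}. Hypotheses~\ref{tcr1}, \ref{tcr2} and~\ref{tcgal} of Theorem~\ref{theo1} already guarantee that $F/K$ is Galois with ramification break $1$ and that $L/F$ has break $p+1$, while condition~(i) of the present theorem (established as necessary and sufficient for the Galois closure to be a $p$-group in the paragraph preceding the statement) guarantees further that $L/F$ is Galois cyclic. Under these hypotheses Prop.~\ref{prop_shift} applies, and Prop.~\ref{clos1}--\ref{clos2} characterise the length $m$ of $\Gal(\tilde{L}/F)$ as an $\F_p[G]$-module and whether $\tilde{L}/F$ is totally ramified, respectively in terms of the existence of a ``bad'' $\alpha\in L^\times$ (meaning $N_{L/K}(\alpha)\in(K^\times)^p\setminus N_{F/K}(F^\times)^p$) whose class $[N_{L/F}(\alpha)]_F$ lies in prescribed subgroups $S_m$ and $V_m$ of $P_F$.

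Running the iteration $\ell=p-1,p-2,\dots,1$, at each step one first tests existence of a bad $\alpha$ with $[N_{L/F}(\alpha)]_F\in V_{\ell+1}$ and, if negative, with $[N_{L/F}(\alpha)]_F\in S_\ell$. The former is equivalent, via the Artin--Hasse expansion worked out in the proof of Theorem~\ref{theo1}, to the inclusion $A_{\ell+1}(\kk_K)\subseteq A(\kk_K)$, which by Prop.~\ref{prop_add} translates into condition~\ref{tcpp1}, \ref{tcell} or~\ref{tc2} depending on $\ell+1$; at $\ell+1=p$ one uses $V_p=V_{p+1}$, forcing the test to go through $A_{p+1}$. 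The latter test, by Prop.~\ref{prop_shift}, reduces to $A_\ell(\bar\theta^\ell)\in A(\kk_K)$ for the prescribed $\bar\theta$, and the substitution $X\mapsto\bar\theta^\ell X$ described in the text converts this into the condition that $U(X)$ have a root in $\kk_K$. For $\ell=1$ the $c_k$-analysis must absorb the extra quadratic term $-\frac{1}{2}(f_pT^p+f_{p^2}T^{p^2})^2$ flagged after the definition of the $c_k$, and the test becomes $\overline{c_1(\theta)/p^2}\in V$, i.e., condition~\ref{tc1}.

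Reading off the outcome, cases (ii)--(iv) correspond to the first iteration step at which a test fails. If the $V_p$-test fails (condition~\ref{tcpp1} of Theorem~\ref{theo1}) we obtain $m=p$ with total ramification and breaks $1,\dots,p-1,p+1$ by Prop.~\ref{clos2}, giving case~(ii). If all $V_{j+1}$-tests for $j\geq\ell$ pass but the $V_\ell$-test (condition~\ref{tcell} or~\ref{tc2}) fails with $\ell$ maximal, then the length is $\ell$ or $\ell+1$ and total ramification holds or fails according to whether the intervening $S_\ell$-test passes, i.e.\ whether $U(X)$ has a root in $\kk_K$; this gives case~(iii). Finally, if all the $A_j$-conditions pass and the iteration reaches $\ell=1$, the failure of $\overline{c_1(\theta)/p^2}\in V$ gives $m=2$ with an unramified part and break $p+1$, which is case~(iv); and if every test passes then $m=1$ and $L/K$ is cyclic, consistent with Theorem~\ref{theo1}. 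The main obstacle will be the careful bookkeeping between the $V$-tests and $S$-tests (especially at the boundaries $\ell=p-1$, where $V_p=V_{p+1}$ forces the use of $A_{p+1}$, and $\ell=1$, where the quadratic correction must be accounted for), together with verifying that the polynomial $U(X)$ faithfully encodes the membership $A_\ell(\bar\theta^\ell)\in A(\kk_K)$ via the substitution above.
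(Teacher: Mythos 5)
Your proposal follows the same route as the paper: it reduces the theorem to Propositions \ref{clos1} and \ref{clos2} via the descending iteration of $V_{\ell+1}$- and $S_\ell$-tests, identifies each test with the corresponding condition of Theorem \ref{theo1} through the additive polynomials $A_\ell$ (including the boundary adjustments at $\ell=p-1$ via $V_p=V_{p+1}$ and at $\ell=1$ via the quadratic correction to $c_1$), and obtains $U(X)$ by the substitution $X\mapsto\bar\theta^\ell X$ exactly as in the text. This matches the paper's own argument in both structure and level of detail, so the proposal is correct.
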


It turns out that we just worked out the hard case of the classification of all polynomials of
degree $p^2$ whose Galois group is a $p$-group.

We keep the notation of the previous part of this section. We have classified in Theorem \ref{theo2}
all polynomials such that $L/F$ has ramification break at $p+1$ and the normal closure is a
$p$-group, and it turned out that the condition on the ramification number is sufficient to
guarantee that the Galois group of the normal closure has exponent $p^2$. Conversely if the
ramification number is $\leq{}p-1$ then either $L\subset{}K_m$ for some $m<p$ and
$\Gal(\tilde{L}/F)$ has length $\leq{}m$ and $\Gal(\tilde{L}/K)$ is the splitting extension of $G$,
either $\Gal(\tilde{L}/F)$ has length $p$ and there is only one possibility for $\Gal(\tilde{L}/K)$
which is both a split extension and has exponent $p^2$, and is a wreath product of two cyclic groups
of order $p$.

Again as above, assume $m$ to be the smallest integer such that $[N_{L/F}(L^\times)]_F$ contains
$V_{m+1}$. The ramification number of $L/F$ is $m$, and the length of $\Gal(\tilde{L}/K)$ as
$G$-module can be $m$ when the norms also contain $S_m$, or $m+1$ if this is not the case. Since
$S_\ell=\langle\pi_F^{(\sigma-1)^{\ell}}\rangle+V_{\ell+1}$ to resolve this ambiguity we should test
whether $[\pi_F^{(\sigma-1)^{\ell}}]_F\in{}[N_{L/F}(L^\times)]_F$. Since
$\pi_F^{(\sigma-1)^{\ell}}\in{}U_{\ell,F}$ and $N_{L/F}(L^\times)\supset{}U_{\ell+1,F}$ we can just test if
\[
    N_{L/F}(1+\theta\pi^{\ell}) = \pi_F^{(\sigma-1)^{\ell}} + \cO(\pi^{\ell+1})
\]
for some unit $\theta\in{}K$.

Factorizing in $L$ the ramification polynomial $f(X+\pi)$ over the Newton polygon we have that
$f(X+\pi)=Xg(X)h(X)$, where $g(X)$ has degree $p-1$ with roots of valuation $\ell+1$ and $h(X)$
degree $p^2-p$ and roots with valuation $2$. We can take $g(X)$ to be monic and with roots
$\tau^i(\pi)-\pi$, where $\tau$ is an automorphism of order $p$ of the normal closure of $L$ over
$F$ and $1\leq{}i<p$, and $L/F$ is Galois if and only if $g(X)=X^{p-1}+\dots+g_1X+g_0$ splits in
linear factors in $L$. If we can write $\tau(\pi)-\pi=\eta\pi^{\ell+1}+\cO(\pi^{\ell+2})$ with
$\eta\in{}K$, then $\tau(\pi)$ can be approximated in $L$ better than by any other conjugate, and
consequently $L/F$ is Galois by Krasner lemma. On the other hand if $L/K$ is Galois we certainly
have such an expression for some $\eta$. Since
\begin{align}
   g_0&=\prod_{i=1}^{p-1}(\tau^i\pi-\pi)\\
     &\equiv\prod_{i=1}^{p-1}i\eta\pi^{\ell+1}\equiv -\eta^{p-1}\pi^{(p-1)(\ell+1)}
   \end{align}
we have that $L/F$ is Galois if and only if $-g_0$ is a $(p-1)$-th power.

The monomial in $X^p$ of $f(X+\pi)$ is
\[
   \binom{p^2-p}{p}f_p\pi^{p^2-2p}X^p=h_0X^p
\]
where $h_0$ is the constant term of $h(X)$, while the monomial in $X$ is
\[
   Xf'(\pi) = (p^2-r)f_r\pi^{p^2-r-1}X = g_0h_0X
\]
where $r$ should be $p^2-(p-1)\ell+p$ and $v_p(f_{p^2-r})=2$, considering that $v_L(f'(\pi))$ is
$(p^2-p)\cdot2+(p-1)\cdot(\ell+1)$.
   
Since $\binom{p^2-p}{p}\equiv-1\imod{p}$ by the definition of $r$ we have taking the ratio of the
coefficients of the monomials above that
\begin{align*}
  \frac{g_0}{\pi^{(p-1)(\ell+1)}} &=
  \frac{-rf_r\pi^{(p-1)\ell-p-1}}{-f_p\pi^{p^2-2p}}\cdot\pi^{-(p-1)(\ell+1)} + \cO(\pi)\\
  &= \sfr{rf_r}{f_p} \cdot\pi^{-p^2} + \cO(\pi) = -\sfr{rf_r}{f_pf_{p^2}} + \cO(\pi),
\end{align*}
being $\pi^{p^2}=-f_0+\cO(\pi)$.

Since $r\equiv\ell\imod{p}$ we obtained that $\bar\eta^{p-1}$ is equal to
$\overline{\sfr{\ell{}f_r}{f_pf_{p^2}}}$, and it is contained in $\kk_K^{p-1}$ if and only if $g_0$
is a $p-1$-th power. Put again $F_p=\overline{\sfr{f_p}{p}}$, $F_{p^2}=\overline{\sfr{F_{p^2}}{p}}$
and $G_i=\overline{\sfr{f_i}{p^2}}$ for $i\neq{}p,p^2$.

\begin{condition}
  $L/F$ is Galois if and only if $\sfr{\ell{}G_r}{F_pF_{p^2}}$ is in $\kk_K^{p-1}$, where $r$ is
  equal to $p^2-(p-1)\ell+p$.
\end{condition}

Let's recall that from \cite[Chap. 3, \S1, Prop. 1.5]{fesenko2002local} we have that
\[
    N_{L/F}(1+\theta\pi^\ell) = 1+(\theta^p-\eta^{p-1}\theta)\pi_F^\ell+\cO(\pi_F^{\ell+1}),
    \]
while
\[
   \pi_F^{(\sigma-1)^\ell}=1-k\rho^\ell\pi_F^{\ell}+\cO(\pi_F^{\ell+1})
\]
for $\bar\rho^{p-1}=-\sfr{F_p}{F_{p^2}}$ and some integer $k$ prime with $p$, by
Prop. \ref{prop_shift}. From what observed at the beginning, we obtain that the length of
$\Gal(\tilde{L}/K)$ is $\ell$ when $X^p-\sfr{\ell{}G_r}{F_pF_{p^2}}X=\bar\rho^\ell$ has solution in
$\kk_K$, and $\ell+1$ if this is not the case. Replacing $X$ by $\bar\rho^\ell{}X$ and dividing by
$\bar\rho^\ell$ this is equivalent to testing if
\[
   (-\sfr{F_p}{F_{p^2}})^\ell{}X^p - \sfr{\ell{}G_r}{F_pF_{p^2}}X = 1
\]
has solution in $\kk_K$.

Consequently we obtain

\begin{theo}
\label{theo3}
Let $2\leq\ell\leq{}p-1$ an let $r=p^2-(p-1)\ell+p$, and assume that $f(X)$ is such that
\begin{enumerate}[leftmargin=2.5em]
\item $v_p(f_{p})=1$, and $v_p(f_{pi})\geq2$ for $i\in\I{2,p-1}$,
\item $v_p(f_{i})\geq2$ for $i\in\I{1,r-1}$, $v(f_r)=2$ and $v_p(f_{i})\geq3$ for
  $i\in\I{r+1,p^2-1}$,
\saveenum
\end{enumerate}
putting $F_p=\overline{\sfr{f_p}{p}}$, $F_{p^2}=\overline{\sfr{f_{p^2}}{p}}$, and
$G_i=\overline{\sfr{f_i}{p^2}}$ for all $i\neq{}p,p^2$ we have
\begin{enumerate}[leftmargin=2.5em]
\loadenum
\item $-\sfr{F_p}{F_{p^2}}=\bar\rho^{p-1}$ for some $\bar\rho\in\kk_K^\times$,
\item $\sfr{\ell{}G_r}{F_pF_{p^2}}=\bar\eta^{p-1}$ for some $\bar\eta\in\kk_K^\times$.
\end{enumerate}
Let $L$ be the extension determined by $f(X)$, $\tilde{L}$ the normal closure over $K$, and $F$ the
unique subextension of degree $p$ contained in $L$. Then $\Gal(\tilde{L}/K)$ is a split extension of
$G=\Gal(F/K)$ by the indecomposable $\F_p[G]$-module $M=\Gal(\tilde{L}/F)$ and furthermore defining
\[
  U(X) = (-\sfr{F_p}{F_{p^2}})^\ell{}X^p - \sfr{\ell{}G_r}{F_pF_{p^2}}X - 1
\]
we have that if
\begin{itemize}[leftmargin=1.5em]
\item $U(X)$ has no root in $\kk_K$, then $M$ has length $\ell+1$ and $\tilde{L}/K$ is formed by an
  unramified extension followed by a totally ramified with upper ramification breaks
  $1,2,\dots,\ell$,
\item $U(X)$ has some root in $\kk_K$, then $M$ has length $\ell$ and $\tilde{L}/K$ is totally
  ramified with upper ramification breaks $1,2,\dots,\ell$.
\end{itemize}
\end{theo}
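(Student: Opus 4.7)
The plan is to establish the theorem in four stages, paralleling closely the analysis preceding the statement.

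First, I would use the valuation conditions (1)--(2) together with the Newton polygon of the ramification polynomial $f(X+\pi)$ to obtain a factorization $f(X+\pi) = X \cdot g(X) \cdot h(X)$ over $L$, where $g$ has degree $p-1$ with roots of $v_L$-valuation $\ell+1$ and $h$ has degree $p^2 - p$ with roots of valuation $2$. Combined with condition (3), which by Prop.~\ref{prop_add} (in the form used at \eqref{cond1}) guarantees that $F/K$ is Galois, this forces $L/K$ to contain a unique cyclic subextension $F/K$ of degree $p$ with ramification break $1$, and makes $L/F$ totally ramified with break $\ell$.

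Second, I would verify that condition (4) is precisely the condition for $L/F$ to be Galois. Writing $\tau(\pi) - \pi = \eta \pi^{\ell+1} + \cO(\pi^{\ell+2})$ for a hypothetical generator $\tau$ of $\Gal(L/F)$, Krasner's lemma implies $L/F$ is Galois iff such an $\eta$ exists in $L$, equivalently iff $-g_0$ is a $(p-1)$-th power, since $g_0 = \prod_{i=1}^{p-1}(\tau^i \pi - \pi) \equiv -\eta^{p-1} \pi^{(p-1)(\ell+1)}$ to leading order. Computing $g_0$ via the ratio of the monomials in $X$ and $X^p$ of $f(X+\pi)$---whose dominant terms come from $X f'(\pi) \sim (p^2 - r) f_r \pi^{p^2 - r - 1} X$ and $\binom{p^2-p}{p} f_p \pi^{p^2-2p} X^p \equiv -f_p \pi^{p^2-2p} X^p$---and reducing via $\pi^{p^2} \equiv -f_{p^2}$ modulo $\pi$, this translates into $\bar\eta^{p-1} = \sfr{\ell G_r}{F_p F_{p^2}} \in \kk_K^{p-1}$.

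Third, I would determine $\Gal(\tilde L/K)$ via the length $m$ of the $\F_p[G]$-module $\Gal(\tilde L/F)$, where $G = \Gal(F/K)$. Since the upper ramification break of $L/F$ is $\ell \leq p-1$, we have $L \subseteq K_\ell$ in the notation preceding Prop.~\ref{clos1}, and the analysis preceding Prop.~\ref{clos2} shows that $\Gal(\tilde L/K)$ is automatically a split extension in this regime. Moreover, $m$ equals $\ell$ if $[\pi_F^{(\sigma-1)^\ell}]_F \in [N_{L/F}(L^\times)]_F$ and equals $\ell+1$ otherwise; the ramification jumps of $\tilde L/K$ then follow from the socle filtration analysis as in Prop.~\ref{clos2}.

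Finally, I would reduce the dichotomy to the polynomial $U(X)$. Using Prop.~\ref{prop_shift} to expand $\pi_F^{(\sigma-1)^\ell} = 1 - k \bar\rho^\ell \pi_F^\ell + \cO(\pi_F^{\ell+1})$ for some integer $k$ prime to $p$, and \cite[Chap.~3, \S1, Prop.~1.5]{fesenko2002local} to expand $N_{L/F}(1 + \theta \pi^\ell) = 1 + (\theta^p - \bar\eta^{p-1} \theta) \pi_F^\ell + \cO(\pi_F^{\ell+1})$, solvability for some $\theta$ reduces to $X^p - \bar\eta^{p-1} X = \bar\rho^\ell$ having a solution in $\kk_K$ (the factor $-k$ is absorbed by the $\F_p$-linearity of the image). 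After the substitution $X \mapsto \bar\rho^\ell X$ and division by $\bar\rho^\ell$, this equation becomes precisely $U(X) = 0$. The main obstacle will be the careful tracking of the constants $k$, $\bar\rho$, $\bar\eta$ and the factor $\ell$ across these reductions so that the test coincides exactly with the stated polynomial; once this bookkeeping is done, the four steps combine to yield the claimed classification.
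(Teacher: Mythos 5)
Your proposal is correct and follows essentially the same route as the paper: Newton polygon factorization $f(X+\pi)=Xg(X)h(X)$, the Krasner/$g_0$ computation giving condition (4), the socle filtration of $P_F$ to pin down the length $m\in\{\ell,\ell+1\}$ and splitness, and the reduction via Prop.~\ref{prop_shift} and the norm formula to the solvability of $U(X)=0$. The only slight imprecision is the blanket claim $L\subseteq K_\ell$ (that inclusion holds exactly in the branch where $U$ has a root; otherwise $L\subseteq K_{\ell+1}$, or the length is $p$ and the group is the wreath product), but this does not affect the splitness conclusion or the rest of the argument.
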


What is left is the easy case for $\ell=1$, which is considered separately. In this case $L/K$ has
$1$ as unique ramification jump, $v_p(f_1)=1$ while $v_p(f_i)\geq2$ for $i\in\I{2,p^2-1}$, and
consequently put $F_i=\overline{\sfr{f_i}{p}}$ for $i=1,p,p^2$. The map
$\sfr{U_{1,L}}{U_{2,L}}\rightarrow\sfr{U_{1,K}}{U_{2,K}}$ induced by $N_{L/K}$ is described by the
additive polynomial $A(Y)=F_{p^2}Y^{p^2}+F_pY^p+F_1Y$, and $L/K$ is Galois precisely when
$N_{L/K}(U_{1,L})=1+pW$ for a subspace $W$ of codimension $2$ in $\kk_K$, that is when $A(Y)$ splits
completely in $\kk_K$. On the other hand the normal closure $\tilde{L}/K$ is a $p$-extension if and
only if $L$ becomes abelian elementary over the unique unramified extension of degree $p$ of $K$, or
equivalently if $A(Y)$ splits completely over the unique extension of degree $p$ of $\kk_K$.

\begin{theo}
\label{theo4}
Assume that $f(X)$ is such that
\begin{enumerate}[leftmargin=2.5em]
\item $v_p(f_{p})\leq{}1$, and $v_p(f_{pi})\geq2$ for $i\in\I{2,p-1}$,
\item $v_p(f_1)=1$, and $v(f_i)\geq2$ for $i\in\I{2,p^2-1}$,
\saveenum
\end{enumerate}
and putting $F_i=\overline{\sfr{f_i}{p}}$ for $i=1,p,p^2$
\begin{enumerate}[leftmargin=2.5em]
\loadenum
\item the polynomial $F_{p^2}Y^{p^2}+F_pY^p+F_1Y$ has a root in $\kk_K^\times$, and
  $\sfr{F_1}{F_{p^2}}\in(\kk_K^\times)^{p-1}$.
\end{enumerate}
Let $L$ be the extension determined by $f(X)$, $\tilde{L}$ the normal closure over $K$, and $F$ the
unique subextension of degree $p$ contained in $L$.  Then
\begin{itemize}[leftmargin=1.5em]
\item if $F_{p^2}Y^{p^2}+F_pY^p+F_1Y$ does not split in $\kk_K$ then $M$ has length $2$, and
  $\tilde{L}/F$ is formed by an unramified extension followed by a totally ramified extension with
  upper ramification break $1$, and $\Gal(\tilde{L}/K)$ is a split extension of $\Gal(F/K)$ by
  $\Gal(\tilde{L}/F)$,
\item if $F_{p^2}Y^{p^2}+F_pY^p+F_1Y$ has all roots in $\kk_K$ then $M$ has length $1$ and $L/K$ is
  an abelian elementary $p$-extension.
\end{itemize}
\end{theo}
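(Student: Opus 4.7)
The plan is to apply the same local-class-field-theory and resultant strategy used in the earlier theorems, now in the degenerate regime where the unique ramification break sits at $1$ rather than at $p+1$. Conditions (1) and (2) force $L/K$ to be totally ramified of degree $p^2$ with $v_L(\sD_{L/K}) = 2p^2-2$: a Newton-polygon inspection of $f'(\pi) = \sum_k (p^2-k)f_k \pi^{p^2-k-1}$ under the valuation hypotheses shows that the unique minimal monomial is the $k=1$ one, of valuation $p^2 + (p^2-2) = 2p^2-2$, the $k=p$ term giving $3p^2-p-1$ and all others being strictly larger. This is exactly the different of a $(\Z/p\Z)^2$ extension with single break at $1$.

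I then compute $N_{L/K}$ modulo $U_{2,K}$ via $N_{L/K}(1-\theta\pi) = \tilde f(\theta)$: by (1) and (2), only $f_1, f_p, f_{p^2}$ have $p$-valuation exactly $1$, yielding
\[
  \tilde f(\theta) \equiv 1 + p\bigl(F_1\bar\theta + F_p\bar\theta^p + F_{p^2}\bar\theta^{p^2}\bigr) \imod{p^2}.
\]
Hence $N_{L/K}(U_{1,L})U_{2,K}/U_{2,K}$ corresponds under $U_{1,K}/U_{2,K} \cong \kk_K$ to the image $A(\kk_K)$ of $A(Y) = F_{p^2}Y^{p^2} + F_p Y^p + F_1 Y$. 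Let $d$ denote its $\F_p$-codimension, which equals the $\F_p$-dimension of the roots of $A$ in $\kk_K$. By local class field theory, since the norm index $[K^\times:N_{L/K}(L^\times)]$ is bounded by $p^2=[L:K]$ and fully captured by the contribution above $U_{2,K}$ in this ramification regime, $L/K$ is Galois abelian of degree $p^2$ precisely when $d = 2$. Condition (3) is Proposition \ref{add_gal} applied (after dividing by $F_{p^2}$) to $A$: it asserts that the splitting field of $A$ over $\kk_K$ is a $p$-extension, which is equivalent to the Galois closure $\tilde L / K$ being a $p$-extension.

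Two sub-cases now arise. If $A$ splits in $\kk_K$, then $d=2$, $L/K$ is Galois elementary abelian of degree $p^2$, $\tilde L = L$, and $M = \Gal(L/F)$ has length $1$. Otherwise $A$ splits only over the unique unramified extension $K'/K$ of degree $p$: $d=1$, $L/K$ is not Galois, but repeating the computation after base change to $K'$ (which replaces $\kk_K$ by $\kk_{K'}$ and makes $A$ split) shows $LK'/K'$ is Galois elementary abelian of degree $p^2$. Hence $\tilde L = LK'$; since $L \cap K' = K$ (totally ramified vs.\ unramified), $\Gal(\tilde L/K) \cong \Gal(\tilde L/K')\rtimes \Gal(K'/K)$ as a split extension, while $\tilde L/F$ decomposes as the unramified degree-$p$ extension $FK'/F$ followed by a totally ramified degree-$p$ piece of upper break $1$, giving $M$ of length $2$.

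The main obstacle is justifying the identification $\tilde L = LK'$ in the non-Galois sub-case together with the split structure of $\Gal(\tilde L/K)$: one needs that $LK'/K$ is Galois (since $K'/K$ is the unique unramified degree-$p$ extension of $K$, hence stable under any $K$-embedding of $LK'$, and $LK'/K'$ is Galois abelian) and minimal among normal $p$-extensions of $K$ containing $L$ (which follows from $d=1$, since any strictly smaller normal closure would force $L/K$ itself to be Galois).
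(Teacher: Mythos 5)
Your argument is correct and follows essentially the same route as the paper's (very terse) treatment of this case: identify the norm map on $\sfr{U_{1,L}}{U_{2,L}}$ with the additive polynomial $F_{p^2}Y^{p^2}+F_pY^p+F_1Y$, read off that $L/K$ is abelian elementary exactly when it splits in $\kk_K$, and otherwise pass to the unramified degree-$p$ extension $K'$ (over which condition (3), via Proposition \ref{add_gal}, forces the polynomial to split) to obtain $\tilde{L}=LK'$. The additional details you supply — the computation $v_L(\sD_{L/K})=2p^2-2$, the minimality argument for $\tilde{L}=LK'$, and the split structure of $\Gal(\tilde{L}/K)$ — correctly flesh out what the paper leaves implicit.
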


Theorems \ref{theo2}, \ref{theo3} and \ref{theo4} cover all possible ramification jumps of the
extension $L/F$, so they completely describe the Galois groups of polynomials of degree $p^2$ whose
splitting field is a $p$-extension.

\section{Polynomials of degree $p^3$ generating a cyclic extension}
We proceed with the same strategy used for the polynomials of degree $p^2$, starting from the
conditions on the valuations of the coefficients.

Let $f(X)=X^{p^3}+\dots+f_{p^3-1}X+f_{p^3}$, since the different has now valuation $4p^3-p^2-p-2$ it
will be determined by the monomial $f_{p^2+p+1}X^{p^3-p^2-p-1}$, $v_p(f_{p^2+p+1})=3$,
$v_p(f_i)\geq3$ if $(i,p)=1$ and $v_p(f_i)\geq4$ if furthermore $i>p^2+p+1$. Let $\pi$ be a root,
the coefficients of the term of degree $p$ of the ramification polynomial $f(X+\pi)$ will have
valuation $(p^3-p^2)\cdot2+(p^2-p)\cdot{}(p+1)=3p^3-p^2-2p$ and has to come from a monomial
$f_{p^3-i}(X+\pi)^i$ contributing the term $\binom{i}{p}f_{p^3-i}X^p\pi^{i-p}$, and we deduce the
$i$ has to be $i=p^3-p^2-p$, that $v_p(f_{p^2+p})=2$, that $v_p(f_{pi})\geq2$ for $(i,p)=1$ and
$v_p(f_{pi})\geq3$ if furthermore $i\geq{}p+2$. Similarly considering the coefficient of the term of
degree $p^2$ of the ramification polygon which shall have valuation $\geq2p^3-2p^2$ we obtain that
$v_p(f_{p^2})=1$ and $v_p(f_{p^2i})\geq2$ for all indices such that $(i,p)=1$.

\begin{condition}
We must have
\begin{enumerate}
\item $v_p(f_{p^2})=1$ and $v_p(f_{p^2i})\geq{}2$ for $i\in\I{2,p-1}$,
\item $v_p(f_{pi})\geq{}2$ for all $i\in\I{1,p-1}$, $v_p(f_{p^2+p})=2$, and $v_p(f_{pi})\geq{}3$
  for all $i\in\I{p+1,p^2-1}$,
\item $v_p(f_{i})\geq{}3$ for all $i\in\I{1,p^2+p-1}$, $v_p(f_{p^2+p+1})=3$ and
  $v_p(f_{i})\geq{}4$ for all $i\in\I{p^2+p+2,p^3-1}$.
\end{enumerate}
\end{condition}

Again working like in degree $p^2$, we shall require $N_{L/K}(U_{1,L})^{p^{i-1}}\cap{}U_{i+1,L}$ to
be contained in $1+p^iV$ for $1\leq{}i\leq3$ and some $\F_p$-vector space $V$, and after determining
$V$ we will have to verify the condition on the combinations of the norms of elements of the form
$1+\theta\pi^{\ell}$ for a unit $\theta$, and $1\leq\ell\leq{}p^2+p+1$ and $(\ell,p)=1$.

Let's expand again $\prod_{i=0}^\ell{}\tilde{f}(\zeta_\ell^i{}T)$ modulo $p^4$, taking into account
the valuations of the $f_i$ and evaluating directly the $\Sigma_\lambda(\ell)$ via
Prop. \ref{prop_sums} it can be written with the terms in increasing valuation as

\begin{align}
  \kp_K\!\ni\! &\left[\qquad\qquad\qquad\qquad\qquad
\begin{gathered}
  + \delta_{\ell,1}f_{p^2}T^{p^2} + \delta_{\ell,1}f_{p^3}T^{p^3}
\end{gathered}
\right.\\
  \kp_K^2\!\ni\! &\left[\qquad\qquad
\begin{gathered}
  + \frac{1}{2}\delta_{\ell,2}^{[2]}f_{p^2}^2T^{2p^2} + \delta_{\ell,p+1}^{[2]}f_{p^2}f_{p^3}T^{p^3+p^2}+
  \frac{1}{2}\delta_{\ell,2}^{[2]}f_{p^3}^2T^{2p^3} \\
  +  \sum_{j\in\I{2,p-1}} \delta_{\ell,j}f_{p^2j}T^{p^2j}
  +  \sum_{k\in\I{1,p+1}} \delta_{\ell,k}f_{pk}T^{pk}
\end{gathered}
\right.\\
  \kp_K^3\!\ni\! &\left[
\begin{gathered}
  + \frac{1}{3}\delta_{\ell,3}^{[3]}f_{p^2}^3T^{3p^2}
  + \delta_{\ell,p+2}^{[3]}f_{p^3}f_{p^2}^2T^{p^3+2p^2}
  + \delta_{\ell,2p+1}^{[3]}f_{p^3}^2f_{p^2}T^{2p^3+p^2}
  + \frac{1}{3}\delta_{\ell,3}^{[3]}f_{p^3}^3T^{3p^3}\\
  +  \!\!\!\!\sum_{j\in\I{2,p-2}}\! \!\delta_{\ell,j+1}^{[2]} f_{p^2}f_{p^2j}T^{p^2+p^2j}
  +  \delta_{\ell,1}^{[2]} f_{p^2}f_{p^3-p^2}T^{p^3}
  +  \!\!\!\!\sum_{k\in\I{1,p+1}}\!\! \delta_{\ell,k+p}^{[2]} f_{p^2}f_{pk}T^{p^2+pk}\\
  +  \sum_{j\in\I{2,p-1}} \delta_{\ell,j+p}^{[2]} f_{p^3}f_{p^2j}T^{p^3+p^2j}
  +  \sum_{k\in\I{1,p+1}} \delta_{\ell,k+p^2}^{[2]} f_{p^3}f_{pk}T^{p^3+pk}\\
  +  \sum_{j\in\I{p+2,p^2-1}} \delta_{\ell,j} f_{pj}T^{pj}
  +  \sum_{k\in\I{1,p^2+p+1}} \delta_{\ell,k} f_{k}T^{k}
\end{gathered}
\right.
\end{align}
While this expansion looks scary we can start noticing that since raising to a $p$-th power induces
an automorphism on the set of multiplicative representatives we have considering the expansion
modulo $p^3$ that the conditions stated in Theorem \ref{theo1} must be satisfied with $f_{pi}$in
place of $f_i$. Consequently put $F_{i}=\overline{\sfr{f_{i}}{p}}$ for $i=p^2,p^3$,
$G_i=\overline{\sfr{g_i}{p^2}}$ for $i\in{}p\I{1,p+1}$ or $i\in{}p^2\I{2,p-1}$, let
$A(Y)=F_{p^3}Y^p+F_{p^2}Y$ and put $V=A(\kk_K)$. Such conditions are satisfied if and only if $V$
has codimension $1$ in $\kk_K$ and the norms contained in $U_{1,K}$ or $U_{2,K}$ are respectively in
$1+pV$ and $1+p^2V$.

Similarly to the case in degree $p^2$, for $\ell\geq2$ this sum can be written as
$D_\ell(\theta)=\sum_{\ell\mid{}k}d_k(\theta^k)$ where the $d_\ell(T^\ell)$ are the polynomial
obtained if every $\delta^{[m]}_{\ell,i}$ is interpreted as a Kronecker's delta and
$1+d_\ell(\theta)\equiv{}N(E(\theta\pi^\ell))\imod{p^4}$. For $\ell=1$ there are exceptions because
$\delta^{[m]}_{\ell,i}=0$ for $\ell<m$.

We require the norms in $U_{3,K}$ to be in $1+p^3V$, and let's concentrate first on the case of
$\ell\in\I{p+2,p^2+p+1}$ so that the norms $N_{L/K}(1+\theta\pi^\ell)$ already live in $U_{3,K}$,
and the first few terms of the expansion disappear. For such indices $\ell$, $d_\ell(\theta)$ shall
be in $p^3V$ for each representative $\theta$, and dividing by $p^3$ we can consider the additive
polynomials $A_\ell(Y)=\overline{\sfr{d_\ell(Y)}{p^3}}$ which depending on $\ell$ are
\begin{gather*}
  \qquad \qquad\qquad    -G_{p(\ell-p^2)}F_{p^3}Y^p + H_{\ell}Y
       \qquad\qquad\qquad \qquad \ell\in\I{p^2+1,p^2+p+1},\\
 \qquad\qquad\qquad\qquad H_{p\ell}Y^p + H_{\ell}Y  
       \qquad\qquad\qquad\qquad\qquad \ell\in\I{2p+2,p^2-1},\\
 \qquad  F_{p^3}^2F_{p^2}Y^{p^2}+(H_{p\ell}-F_{p^2}G_{p(p+1)})Y^p + H_{\ell}Y 
       \quad\qquad \ell=2p+1,\\
 \qquad  -F_{p^3}F_{p^2(\ell-p)}Y^{p^2}+(H_{p\ell}-F_{p^2}G_{p(\ell-p)})Y^p + H_{\ell}Y
       \quad\qquad \ell\in\I{p+3,2p-1},\\
 (F_{p^3}F_{p^2}^2-F_{p^3}F_{2p^2})Y^{p^2}+(H_{p\ell}-F_{p^2}G_{2p})Y^p + H_{\ell}Y \qquad \ell=p+2,
\end{gather*}
where we have put $H_k=\overline{\sfr{f_k}{p^3}}$ for $k\in\I{1,p^2+p+1}$ and $k\in{}p\I{p+2,p^2-1}$.

\begin{condition}
For each $\ell\in\I{p+2,p^2+p+1}$ we shall have $A_\ell(\kk_K)\subseteq{}A(\kk_K)$.
\end{condition}

For $\ell\leq{}p+1$ the question is a bit more complicate because in general the norms of
$1-\theta\pi^\ell$ will not be contained in $U_{3,K}$, but a proper combination of norms of elements
of this form may be, and we should require it to be in $1+p^3V$. However for $\eta$ varying the
elements $N_{L/K}(1-\eta\pi)^p$ have norms which cover all classes modulo $U_3$, and consequently
each $N_{L/K}(1-\theta\pi^\ell)$ can be reduced into $U_{3,K}$ by multiplication by an suitable
$N_{L/K}(1-\eta\pi)^p$ for some $\eta$, and we should verify that all such reduction are actually in
$1+p^3V$. The condition for more complicated combinations will certainly also be ensured.

Since the map $\sfr{\kp_K^2}{\kp_K^4}\rightarrow\sfr{U_{2,K}}{U_{4,K}}$ induced by $x\mapsto1+x$ is
still an isomorphism we have that a proper adjustment of $1+\theta\pi^\ell$ (e.g. via the
Artin-Hasse exponential) has norm of the form $1+c_\ell(\theta)+d_\ell(\theta)$, where
$c_\ell(\theta)$ is obtained like in the case $p^2$ by with $f_i$ replaced by $f_{pi}$ and $T$ by
$T^p$, plus some modification for $\ell=2,3$. In other words depending on $2\leq\ell\leq{}p+1$ we
have that the remaining term which we call $g_\ell(Y)$ is
\begin{equation} \label{condb2}
  \begin{array}{cc}
   \left\{-f_{p^3}f_{p^2}Y^{p^2}+f_{p^2+p}Y^p\right\} -f_{p^2}f_pY^p + f_{p+1}Y  & [p+1], \\
   \left\{f_{p^2\ell}Y^{p^2}+ f_{p\ell}Y^p\right\} - f_{p^2}f_{p^2(\ell-1)}Y^{p^2} + f_{\ell}Y & [4,p-1], \\
   \left\{f_{3p^2}Y^{p^2}+f_{3p}Y^p\right\} + \frac{1}{3}f_{p^3}^3Y^{p^3} + \frac{1}{3}f_{p^2}^3Y^{p^2} 
                                    -f_{p^2}f_{2p^2}Y^{p^2} + f_{3}Y & [3], \\
   \left\{-\frac{1}{2}f_{p^3}^2Y^{p^3}+\left(f_{2p^2}
       -\frac{1}{2}f_{p^2}^2\right)Y^{p^2}+f_{2p}Y^p\right\} + f_{2}Y & [2], \\
  \end{array}
\end{equation}
where under braces are the terms which are not identically in $\kp_{K}^3$.  On the other hand
\[
   N(1+\eta\pi) = 1+f_{p^3}\eta^{p^3} + f_{p^2}\eta^{p^2} + f_{p}\eta^{p} \mod p^2V
\]
and consequently
\[
   N(1+\eta\pi)^p = 1+\left\{pf_{p^3}\eta^{p^3} + pf_{p^2}\eta^{p^2}\right\} + pf_{p}\eta^{p} \mod p^3V,
\]
with again under braces are the terms which are not identically in $\kp_{K}^3$.  Consequently let's
consider the polynomial
\[
   h(Z) = \{pf_{p^3}Z^{p^2} + pf_{p^2}Z^{p}\} + pf_{p}Z,
\]
we are looking for values of $Z=\phi_\ell(Y)$, that will be the lifting of some additive polynomials
in $Y$, such that $g_\ell(Y)-h(\phi_\ell(Y))\in\kp^3_K$, to impose the condition that it shall be in
$p^3V$ as well.

The connected additive polynomials $\overline{\sfr{g_\ell(Y)}{p^2}}$, which we denote by
$B_\ell(Y^p)$ replacing $Y^p$ by $Y$, are forced to have image contained $V$ which is the image of
$\overline{\sfr{h(Y)}{p^2}}=A(Y^{p})$, and the condition is that $B_\ell(Y)=A(D_\ell(Y))$ for some other
additive polynomial $D_\ell(Y)$ whose coefficients can be deduced easily.

In particular, being $A(Y)=F_{p^3}Y^p+F_{p^2}Y$ and $B_\ell(Y)$ the polynomials
\begin{equation} 
  \begin{array}{cc}
    -F_{p^2}F_{p^3}Y^p + G_{p^2+p}Y & [p+1], \\
  G_{p^2\ell}Y^p + G_{p\ell} Y & [3,\ p-1],\\
  -\frac{1}{2}F_{p^3}^2Y^{p^2}+\left(G_{2p^2}-\frac{1}{2}F_{p^2}^2\right)Y^p+G_{2p}Y & [2],
  \end{array}
\end{equation}
in view of Prop. \ref{prop_add} we can take as $D_\ell(Y)$ respectively the polynomials
\begin{equation} 
  \begin{array}{cl}
    \sfr{G_{p\ell}}{F_{p^2}}Y & [3,\ p+1], \\
   -\frac{1}{2}F_{p^3}^{\sfr{1}{p}}Y^{p^2}+\sfr{G_{2p}}{F_{p^2}}Y & [2].
  \end{array}
\end{equation}

Now, $B_\ell(Y^p)=A((D_\ell^{\sfr{1}{p}}(Y))^p)$ where $D_\ell^{\sfr{1}{p}}(Y)$ is $D_\ell(Y)$ with
the map $x\mapsto{}x^{\sfr{1}{p}}$ applied to the coefficients. Given the definitions of $A(Y)$ and
$B_\ell(Y)$ in terms of the $h(Y)$ and $g_\ell(Y)$, we have that we can take as $\phi_\ell(Y)$ any
lifting of $D_\ell^{\sfr{1}{p}}(Y)$ to $\cO_K[Y]$.

For $3\leq\ell\leq{}p+1$ let's take a $\rho\in\cO_K$ such that
$\bar\rho^p=\sfr{G_{p\ell}}{F_{p^2}}=\overline{\sfr{f_{p\ell}}{pf_{p^2}}}$, then
$D_\ell(Y)=\bar\rho{}^pY$ and we can take $\phi_\ell(Y)=\rho{}Y$, and the polynomials
$\overline{\frac{1}{p^3}(g_\ell(Y)-h(\phi_\ell(Y)))}$ should take values in $V$. Considering that
\[
   h(\phi_\ell(Y)) = \left\{pf_{p^3}\rho^{p^2}Y^{p^2} + pf_{p^2}\rho^{p}Y^{p}\right\} + pf_{p}\rho{}Y,
\]
depending on $\ell$ they are
  \begin{gather}
    \overline{\left(-\sfr{f_{p^3}f_{p^2}}{p^3}-\sfr{f_{p^3}\rho^{p^2}}{p^2}\right)}Y^{p^2} \\
      +\left[\overline{\left(\sfr{f_{p^2+p}}{p^3}-\sfr{f_{p^2}\rho^p}{p^2}\right)}-F_{p^2}G_p\right]Y^p + H_{p+1}Y
  \end{gather}
for $\ell=p+1$,
  \begin{gather}
    \left[\overline{\left(-\sfr{f_{p^2\ell}}{p^3}-\sfr{f_{p^3}\rho^{p^2}}{p^2}\right)}-F_{p^2}G_{p^2(\ell-1)}\right]Y^{p^2}\\
      +\overline{\left(\sfr{f_{p\ell}}{p^3}-\sfr{f_{p^2}\rho^p}{p^2}\right)}Y^p + H_\ell{} Y, 
  \end{gather}
for $4\leq\ell=p-1$, and
  \begin{gather}
    \frac{1}{3}F_{p^3}^3Y^{p^3} 
  + \left[\overline{\left(\sfr{f_{3p^2}}{p^3}-\sfr{f_{p^3}\rho^{p^2}}{p^2}\right)} + \frac{1}{3}F_{p^2}^3
    -F_{p^2}G_{2p^2} \right]Y^{p^2} \\
  + \overline{\left(\sfr{f_{3p}}{p^3}-\sfr{f_{p^2}\rho^p}{p^2}\right)}Y^p + H_{3}Y
  \end{gather}
for $\ell=3$.

For $\ell=2$ let's take $\rho,\tau\in\cO_K$ such that
$\bar\rho^p=\sfr{G_{p\ell}}{F_{p^2}}=\overline{\sfr{f_{p\ell}}{pf_{p^2}}}$ and
$\bar\tau^{p^2}=-\frac{1}{2}{F_{p^3}}=-\frac{1}{2}\overline{\sfr{f_{p^3}}{p^3}}$. Then
$D_\ell(Y)=\bar\tau^pY^p+\bar\rho^pY$ so that we can take $\phi_2(\ell)=\tau{}Y^p+\rho{}Y$, and we
have
\begin{gather*}
   h(\phi_2(Y)) = \left\{pf_{p^3}(\tau{}Y^p+\rho{}Y)^{p^2} 
    + pf_{p^2}(\tau{}Y^p+\rho{}Y)^{p}\right\} + pf_{p}(\tau{}Y^p+\rho{}Y)\\
  = pf_{p^3}\tau^{p^3}Y^{p^3}+pf_{p^3}\rho^{p^2}Y^{p^2} + pf_{p^3} \sum_{i=1}^{p-1} \binom{p^2}{ip}
  \tau^{ip}\rho^{(p-i)p}Y^{ip^2+(p-1)p} + \cO(p^4) \\
  + pf_{p^2}\tau^{p^2}Y^{p^2}+pf_{p^2}\rho^{p}Y^{p} + pf_{p^2} \sum_{i=1}^{p-1} \binom{p}{i}
  \tau^{i}\rho^{(p-i)}Y^{ip+(p-1)} \\
  + pf_{p}\tau{}Y^p+pf_{p}\rho{}Y.
\end{gather*}
Considering that $\frac{1}{p}\binom{p}{i} \equiv \frac{1}{p}\binom{p^2}{ip}\imod{p}$ and the terms
in the sums can be paired in elements that are $pf_{p^3}\binom{p^2}{ip}Z^p+pf_{p^2}\binom{p}{i}Z$
for $Z=\tau^{i}\rho^{(p-i)}Y^{ip+(p-1)}$ and hence in $p^3V$ for each $Z$, we have that up to some
element in $p^3V$ we can write $h(\phi_2(Y))$ as
\begin{equation*}
  pf_{p^3}\tau^{p^3}  Y^{p^3}
  +(pf_{p^3}\rho^{p^2} + pf_{p^2}\tau^{p^2})  Y^{p^2}
  +(pf_{p^2}\rho^{p} + pf_{p}\tau{})  Y^p
  + pf_{p}\rho{}Y.
\end{equation*}
Consequently up to some element of $V$ the polynomial
$\overline{\frac{1}{p^3}(g_\ell(Y)-h(\phi_2(Y)))}$ is the
\begin{equation*}
\begin{gathered}
\overline{\left(-\frac{1}{2}\sfr{f_{p^3}^2}{p^3}-\sfr{f_{p^3}\tau^{p^3}}{p^2}\right)}Y^{p^3}
 +\overline{\left(\sfr{f_{2p^2}}{p^3}-\frac{1}{2}\sfr{f_{p^2}^2}{p^3}
     -\sfr{f_{p^3}\rho^{p^2}}{p^2}-\sfr{f_{p^2}\tau^{p^2}}{p^2}\right)}Y^{p^2}\\
 +\left(\overline{\left(\sfr{f_{2p}}{p^3}-\sfr{f_{p^2}\rho^{p}}{p^2}\right)}-G_p\tau\right)Y^p +
  \left(H_{2}-G_p\rho \right)Y,
\end{gathered}
\end{equation*}
which is required to take values in $V$.

One last effort is required: for $\ell=1$ in the case that $1-\theta\pi$ has norm in $U_{2,K}$ (and
hence in $1+p^2V$), that is when $\theta$ is such that $A(\bar\theta^{p^2})=0$, we should also have
that taking $\eta$ such that $(1-\theta\pi)(1-\eta\pi)^{-p}$ has norm in $U_{3,K}$, than that norm
is required to be actually in $1+p^3V$.

Let $\theta=T$ be as required, the terms which disappear because $\ell=1$ are
\[
   \frac{1}{2}f_{p^2}^2T^{2p^2} + f_{p^3}f_{p^2}T^{p^3+p^2}+\frac{1}{2}f_{p^3}^2T^{2p^3}
    =\frac{1}{2}\left(f_{p^2}T^{p^2}+f_{p^3}T^{p^3}\right)^2,
\]
then
\[
   \frac{1}{3}f_{p^2}^3T^{3p^2} + f_{p^3}f_{p^2}^2T^{p^3+2p^2}+ f_{p^3}^2f_{p^2}T^{2p^3+p^2} +\frac{1}{3}f_{p^3}^3T^{3p^3}
    =\frac{1}{3}\left(f_{p^2}T^{p^2}+f_{p^3}T^{p^3}\right)^3,
\]
and the sums can be decomposed as sums of $(f_{p^2}T^{p^2}+f_{p^3}T^{p^3})f_{p^2j}T^{p^2j}$ and of
 $(f_{p^2}T^{p^2}+f_{p^3}T^{p^3})f_{pk}T^{pk}$, and in particular all such terms are in $\kp_K^4$
 considering the hypotheses on $T$.

Consequently such terms can be assumed to be present, and removing the extra terms we already
studied (or considering the norm of $E(\theta\pi)$) the remaining terms are
\[
  w(T) = f_{p^3}T^{p^3} + f_{p^2}T^{p^2}-f_{p^2}f_{p^3-p^2}T^{p^2}+f_pT^p+f_1{}T.
\]
Assume $\overline{\frac{1}{p^2}\left(f_{p^3}\theta^{p^3}+f_{p^3}\theta^{p^2}+f_p\theta^p\right)}$
can be written as $F_{p^3}\bar\alpha^{p^2}+F_{p^2}\bar\alpha^p$ for some $\bar\alpha$, then taking
any lift $\alpha$ of $\bar\alpha$ we can consider $w(\theta)-h(\alpha)$ which comes from a norm of
the required type, and should be in $p^3V$.

At last, we can state the

\begin{theo} \label{theox}
The Eisenstein polynomial $f(X)=X^{p^3}+f_1X^{p^3-1}+\dots+f_{p^3-1}X+f_{p^3}$ determines a Galois
extension of degree $p^3$ over $K$ if and only if
\begin{enumerate}
\item $v_p(f_{p^2})=1$ and $v_p(f_{p^2i})\geq{}2$ for $i\in\I{2,p-1}$,
\item $v_p(f_{pi})\geq{}2$ for all $i\in\I{1,p-1}$, $v_p(f_{p^2+p})=2$, and $v_p(f_{pi})\geq{}3$
  for all $i\in\I{p+1,p^2-1}$,
\item $v_p(f_{i})\geq{}3$ for all $i\in\I{1,p^2+p-1}$, $v_p(f_{p^2+p+1})=3$ and
  $v_p(f_{i})\geq{}4$ for all $i\in\I{p^2+p+2,p^3-1}$,
  \setcounter{enumi_saved}{\value{enumi}}
\end{enumerate}
putting $F_{p^2}=\overline{\sfr{f_{p^2}}{p}}$, $F_{p^3}=\overline{\sfr{f_{p^3}}{p}}$, and
$\overline{G_i=\sfr{f_i}{p^2}}$ for all $i$ in $p^2\I{2,p-1}$ or in $p\I{1,p+1}$ we have
\begin{enumerate}
\setcounter{enumi}{\value{enumi_saved}}
\item $-\sfr{F_{p^2}}{F_{p^3}}\in{}\kk_K^{p-1}$,
\item $G_{p(p+1)}^p=-F_{p^2}^{p+1}$,
\item $G_{p^2\ell}=F_{p^3}\left(\sfr{G_{\ell{}p}}{F_{p^2}}\right)^p$ for $\ell\in\I{3,p-1}$,
\item $G_{2p^2}=F_{p^3}\left(\sfr{G_{2p}}{F_{p^2}}\right)^p+\frac{1}{2}F_{p^2}\left(F_{p^2}-F_{p^3}^{\sfr{1}{p}}\right)$,
  \setcounter{enumi_saved}{\value{enumi}}
\end{enumerate}
if $\rho$ is such that $\bar\rho^{p(p-1)}=-\sfr{F_{p^2}}{F_{p^3}}$ we have (independently of $\rho$)
\begin{enumerate}
\setcounter{enumi}{\value{enumi_saved}}
\item
  $\overline{\frac{1}{p^2}\left(f_{p^3}\rho^{p^2}+f_{p^2}\rho^{p}+f_p\rho\right)}=F_{p^3}\alpha^p+F_{p^2}\alpha$
  for some $\alpha\in{}\kk_K$,
  \setcounter{enumi_saved}{\value{enumi}}
\end{enumerate}
putting $H_i=\overline{\sfr{f_i}{p^3}}$ for $i$ in $\I{1,p^2+p+1}$ or in $p\I{p+2,p^2-1}$ we have
\begin{enumerate}
\setcounter{enumi}{\value{enumi_saved}}
\item $-G_{p(\ell-p^2)}F_{p^3}=F_{p^3}(\sfr{H_{\ell}}{F_{p^2}})^p$ for $\ell\in\I{p^2+1,p^2+p+1}$,
\item $H_{p\ell}=F_{p^3}(\sfr{H_{\ell}}{F_{p^2}})^p$ for $\ell\in\I{2p+2,p^2-1}$,
\item $H_{p(2p+1)}-F_{p^2}G_{p(p+1)} =F_{p^3}(\sfr{H_{2p+1}}{F_{p^2}})^p+F_{p^2}(F_{p^3}F_{p^2})^{\sfr{1}{p}}$,
\item $H_{p\ell}-F_{p^2}G_{p(\ell-p)} = 
   F_{p^3}(\sfr{ H_{\ell} }{F_{p^2}})^p - F_{p^2}(F_{p^2(\ell-p)})^{\sfr{1}{p}}$ for $\ell\in\I{p+3,2p-1}$,
\item $H_{p(p+2)}-F_{p^2}G_{2p} = F_{p^3}(\sfr{ H_{p+2} }{F_{p^2}})^p +
  F_{p^2}(F_{p^2}^2-F_{2p^2})^{\sfr{1}{p}}$,
\setcounter{enumi_saved}{\value{enumi}}
\end{enumerate}
for each $\ell\in\I{3,p+1}$, let $\rho_\ell$ be such that
$\bar\rho_\ell^p=\sfr{G_{p\ell}}{F_{p^2}}$. Then
\begin{enumerate}
\setcounter{enumi}{\value{enumi_saved}}
\item putting
\[
    Q_{p+1} = \overline{\left(\sfr{f_{p^2+p}}{p^3}-\sfr{f_{p^2}\rho_{p+1}^p}{p^2}\right)}-F_{p^2}G_p,
   \qquad R_{p+1} = \overline{\left(-\sfr{f_{p^3}f_{p^2}}{p^3}-\sfr{f_{p^3}\rho_{p+1}^{p^2}}{p^2}\right)},
\]
we have $Q_{p+1} = F_{p^3}(\sfr{ H_{p+1} }{F_{p^2}})^p + F_{p^2}(\sfr{ R_{p+1} }{F_{p^3}})^{\sfr{1}{p}}$,
\item for each $4\leq\ell\leq{}p-1$ putting
\[
    Q_\ell = \overline{\left(\sfr{f_{p\ell}}{p^3}-\sfr{f_{p^2}\rho_\ell^p}{p^2}\right)}-F_{p^2}G_p,
   \qquad R_\ell = \overline{\left(-\sfr{f_{p^2\ell}}{p^3}-\sfr{f_{p^3}\rho_\ell^{p^2}}{p^2}\right)}-F_{p^2}G_{p^2(\ell-1)},
\]
we have $Q_\ell = F_{p^3}(\sfr{ H_{\ell} }{F_{p^2}})^p + F_{p^2}(\sfr{ R_\ell }{F_{p^3}})^{\sfr{1}{p}}$,
\item putting
\[
    Q_3 = \overline{\left(\sfr{f_{3p}}{p^3}-\sfr{f_{p^2}\rho_3^p}{p^2}\right)},
   \qquad R_3 = \overline{\left(\sfr{f_{3p^2}}{p^3}-\sfr{f_{p^3}\rho_3^{p^2}}{p^2}\right)} + \frac{1}{3}F_{p^2}^3 -F_{p^2}G_{2p^2}
\]
we have $\frac{1}{3}F_{p^2}(F_{p^3}^2)^{\sfr{1}{p}}+F_{p^3}(\sfr{Q_3}{F_{p^2}})^p=R_3+
F_{p^3}(\sfr{F_{p^3}}{F_{p^2}})^p(\sfr{H_3}{F_{p^2}})^{p^2}$,
\setcounter{enumi_saved}{\value{enumi}}
\end{enumerate}
let $\rho_2,\tau_2\in\cO_K$ such that $\bar\rho_2^p=\sfr{G_{p\ell}}{F_{p^2}}$ and
$\bar\tau_2^{p^2}=-\frac{1}{2}{F_{p^3}}$. Then
\begin{enumerate}
\setcounter{enumi}{\value{enumi_saved}}
\item putting
\begin{equation*}
\begin{gathered}
P_2 = H_{2}-G_p\bar\rho,\qquad Q_2 =
\overline{\left(\sfr{f_{2p}}{p^3}-\sfr{f_{p^2}\rho^{p}}{p^2}\right)}-G_p\bar\tau,\\
R_2 = \overline{\left(\sfr{f_{2p^2}}{p^3}-\frac{1}{2}\sfr{f_{p^2}^2}{p^3}
     -\sfr{f_{p^3}\rho^{p^2}}{p^2}-\sfr{f_{p^2}\tau^{p^2}}{p^2}\right)},\qquad
S_2 = \overline{\left(-\frac{1}{2}\sfr{f_{p^3}^2}{p^3}-\sfr{f_{p^3}\tau^{p^3}}{p^2}\right)}
\end{gathered}
\end{equation*}
we have $F_{p^2}(\sfr{S_2}{F_{p^3}})^{\sfr{1}{p}}+F_{p^3}(\sfr{Q_2}{F_{p^2}})^p=R_2+
F_{p^3}(\sfr{F_{p^3}}{F_{p^2}})^p(\sfr{P_2}{F_{p^2}})^{p^2}$,
\setcounter{enumi_saved}{\value{enumi}}
\end{enumerate}

if $\rho,\xi$ are such that $\bar\rho^{p^2(p-1)}=-\sfr{F_{p^2}}{F_{p^3}}$ and
\[
   \overline{\frac{1}{p^2}\left(f_{p^3}\rho^{p^3}+f_{p^2}\rho^{p^2}+f_p\rho^p\right)}
      =F_{p^3}\bar\xi^{p^2}+F_{p^2}\bar\xi^p,
\]
\begin{enumerate}
\setcounter{enumi}{\value{enumi_saved}}
\item we have that
\[
    \overline{\frac{1}{p^3}\left(f_{p^3}(\rho^{p^3}-\xi^{p^2})+f_{p^2}(\rho^{p^2}-\xi^{p})+f_p(\rho^p-\xi)
       -f_{p^2}f_{p^3-p^2}\rho^{p^2}+f_1\rho\right)}
\]
is also of the form $F_{p^3}\bar\omega^p+F_{p^2}\bar\omega$ for some $\bar\omega\in\kk_K$.
\end{enumerate}
\end{theo}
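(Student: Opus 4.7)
The strategy would mirror the treatment of degree $p^2$ in Theorem \ref{theo1}, carried out one norm-filtration level deeper. By local class field theory, since $U_{i+1,K}=U_{1,K}^{p^i}$ when $K/\Q_p$ is unramified, $L/K$ is cyclic of degree $p^3$ precisely when there exists an $\F_p$-subspace $V\subset\cO_K/\kp_K$ of codimension $1$ such that $N_{L/K}(U_{1,L})\subseteq 1+pV$ and $N_{L/K}(U_{1,L})\cap U_{i+1,K}\subseteq 1+p^{i+1}V$ for $i=1,2$. I would begin by extracting conditions (1)-(3) directly from the requirement $v_L(\sD_{L/K})=4p^3-p^2-p-2$ together with an analysis of the ramification polynomial $f(X+\pi)$, as in the discussion preceding the theorem statement.

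Next I would expand $N_{L/K}(1-\theta\pi^\ell)=\prod_i \tilde f(\zeta_\ell^i T)$ modulo $p^4$ using Proposition \ref{prop_sums} to evaluate the symbolic sums $\Sigma_\lambda(\ell)$. Reading off the mod-$p^2$ part for $\ell=1$ gives the additive polynomial $A(Y)=F_{p^3}Y^p+F_{p^2}Y$ and forces $V=A(\kk_K)$; condition (4) ensures that this has codimension $1$. The mod-$p^3$ terms reproduce the structure of the degree-$p^2$ analysis shifted one level, with $f_{pi}$ playing the role of $f_i$: by Proposition \ref{prop_add} this yields conditions (5)-(7), while (8) is the analog of the ``closure condition'' (the $\ell=1$ final condition) from Theorem \ref{theo1}.

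For the genuinely new mod-$p^4$ conditions I would split on $\ell$. For $\ell\in\I{p+2,p^2+p+1}$ the norm $N_{L/K}(1-\theta\pi^\ell)$ already lands in $U_{3,K}$, so extracting the leading additive polynomials $A_\ell(Y)$ listed in the discussion and applying Proposition \ref{prop_add} to the inclusion $A_\ell(\kk_K)\subseteq V$ produces conditions (9)-(13). For $\ell\in\I{2,p+1}$ the norm lies only in $U_{2,K}$ and must first be adjusted by a factor $N_{L/K}(1-\eta\pi)^p$ that brings it into $U_{3,K}$; this motivates the polynomials $g_\ell(Y)$ and $h(\phi_\ell(Y))$, with $\phi_\ell$ chosen as a lift of the pre-image under $A$ predicted by the mod-$p^3$ computation via Proposition \ref{prop_add}. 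Requiring $\overline{p^{-3}(g_\ell(Y)-h(\phi_\ell(Y)))}$ to take values in $V$ and reading off coefficients via Proposition \ref{prop_add} gives conditions (14)-(17). Finally (18) is the $\ell=1$ closure at the $p^4$ level, obtained after isolating the extra $\tfrac12(f_{p^2}T^{p^2}+f_{p^3}T^{p^3})^2$ and cubic terms, which are artifacts of the $\ell=1$ expansion, and then subtracting a suitable $h(\alpha)$.

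The main obstacle will be the bookkeeping of the mod-$p^4$ expansion: for $\ell=2,3$ several of the sums $\Sigma_{(p^2,p^2)}(\ell)$, $\Sigma_{(p^3,p^2)}(\ell)$ and their cubic analogs become non-zero and contribute anomalous terms that must be tracked alongside the ``generic'' ones. The trickiest case is $\ell=2$, where the adjustment polynomial $\phi_2(Y)=\tau Y^p+\rho Y$ is genuinely of degree $p$ in $Y$, so expanding $h(\phi_2(Y))$ produces mixed binomial contributions; one must observe that, since $\tfrac{1}{p}\binom{p^2}{ip}\equiv\tfrac{1}{p}\binom{p}{i}\imod p$, these pair into combinations of the form $pf_{p^3}\binom{p^2}{ip}Z^p+pf_{p^2}\binom{p}{i}Z$ that lie automatically in $p^3V$, leaving only the diagonal contributions that produce condition (17). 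Once every remaining containment is translated via Proposition \ref{prop_add} into an explicit polynomial identity in the coefficients, the equivalence with cyclicity of $L/K$ falls out.
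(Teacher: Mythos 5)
Your proposal follows essentially the same route as the paper: conditions (1)--(3) from the different and the ramification polygon, the mod-$p^3$ analysis reproducing the degree-$p^2$ conditions with $f_{pi}$ in place of $f_i$ to get (4)--(8), the additive polynomials $A_\ell$ for $\ell\in\I{p+2,p^2+p+1}$ combined with Proposition \ref{prop_add} for (9)--(13), the adjustment by $N_{L/K}(1-\eta\pi)^p$ via $g_\ell$ and $h(\phi_\ell)$ for (14)--(17), and the $\ell=1$ closure for (18). You have also correctly anticipated the two genuine technical points of the paper's argument, namely the anomalous quadratic and cubic terms at $\ell=1$ and the pairing of binomial cross-terms in $h(\phi_2(Y))$ via $\tfrac1p\binom{p^2}{ip}\equiv\tfrac1p\binom{p}{i}\imod p$, so the plan matches the paper's proof in both structure and detail.
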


\section{Sums of roots of unity}
We finally prove the lemma about the $\Sigma_\lambda(\ell)$, it is actually much more than needed
but nevertheless is has a nice statement which could still be useful in similar circumstances:

\begin{lemma} \label{combinat}
Let $\lambda=(\lambda_1,\lambda_2,\dots,\lambda_r)$ be a partition, then
\[
   \Sigma_\lambda(\ell) = \sum_{\lambda=\sqcup_{j\in{}J}\lambda^{(j)}} 
    \ell^{\#J} \cdot \prod_{j\in J} (-1)^{\#\lambda^{(j)}-1} (\#\lambda^{(j)}-1)!
\]
where the sum is over all the partitions $\lambda=\bigsqcup_{j\in{}J}\lambda^{(j)}$ (as set) such
that for each $j\in{}J$ the sum $|\lambda^{(j)}|$ of the elements in $\lambda^{(j)}$ is multiple of
$\ell$ and $\#\lambda^{(j)}$ is the cardinality of the subset $\lambda^{(j)}$.
\end{lemma}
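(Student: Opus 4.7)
The plan is to apply Möbius inversion on the lattice $\Pi_r$ of set partitions of $\{1,2,\dots,r\}$, ordered by refinement, with $\hat{0}$ the discrete (all-singletons) partition and $\hat{1}$ the one-block partition. First rewrite $\Sigma_\lambda(\ell)$ as a sum of $\zeta_\ell^{\sum_i\iota_i\lambda_i}$ over tuples $\iota\in\{0,\dots,\ell-1\}^r$ whose entries are pairwise distinct. For an arbitrary tuple $\iota$, let $\pi(\iota)\in\Pi_r$ be the partition whose blocks are the level sets of $i\mapsto\iota_i$. Define
\[
f(\pi)=\sum_{\iota:\,\pi(\iota)=\pi}\zeta_\ell^{\sum_i\iota_i\lambda_i},\qquad
g(\pi)=\sum_{\iota:\,\pi(\iota)\geq\pi}\zeta_\ell^{\sum_i\iota_i\lambda_i}.
\]
Then $\Sigma_\lambda(\ell)=f(\hat{0})$ and, by definition, $g(\pi)=\sum_{\pi'\geq\pi}f(\pi')$.

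The second step is to compute $g(\pi)$ by direct factorization. A tuple with $\iota_i=\iota_j$ whenever $i,j$ belong to the same block $B$ of $\pi$ is determined by choosing a value $\iota_B\in\{0,\dots,\ell-1\}$ for each block, so
\[
g(\pi)=\prod_{B\in\pi}\sum_{\iota_B=0}^{\ell-1}\zeta_\ell^{\iota_B|\lambda^B|},
\]
where $|\lambda^B|=\sum_{i\in B}\lambda_i$. Each inner sum equals $\ell$ when $\ell\mid|\lambda^B|$ and vanishes otherwise; hence $g(\pi)=\ell^{\#\pi}$ if every block of $\pi$ has sum divisible by $\ell$, and $g(\pi)=0$ in all other cases.

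The final step is Möbius inversion on $\Pi_r$. The classical formula for the Möbius function of the partition lattice gives $\mu(\hat{0},\pi)=\prod_{B\in\pi}(-1)^{\#B-1}(\#B-1)!$, so
\[
\Sigma_\lambda(\ell)=f(\hat{0})=\sum_{\pi\in\Pi_r}\mu(\hat{0},\pi)\,g(\pi),
\]
and substituting the two expressions collapses the sum to those $\pi$ whose blocks all have sum divisible by $\ell$. Identifying such a set partition of $\{1,\dots,r\}$ with the corresponding decomposition $\lambda=\sqcup_{j\in J}\lambda^{(j)}$ of the multiset of parts of $\lambda$ matches the formula in the statement exactly.

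The main obstacle is conceptual rather than computational: one has to set up the Möbius inversion in the correct direction on $\Pi_r$, translate cleanly between a set partition of the index set $\{1,\dots,r\}$ and a decomposition of the multiset of parts of $\lambda$ (remembering that equal parts are distinguished by their indices, so the blocks are genuine subsets of indices), and quote the standard formula for $\mu(\hat{0},\pi)$. Beyond the factorization of $g(\pi)$ into geometric sums, no nontrivial calculation is required.
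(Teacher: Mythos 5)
Your proof is correct, and it reaches the stated formula by a route that is genuinely different in presentation from the paper's, though the two are close cousins. The paper runs an inclusion--exclusion over the sets $A_{(i,j)}=\{\iota:\iota_i=\iota_j\}$, organizes the resulting terms by the connected components of the graph formed by the chosen pairs, and is then forced to evaluate the signed count $K_i$ of connected graphs on $i$ vertices (even minus odd number of edges); it does this from scratch via the recurrence $K_{i+2}=-\sum_j\binom{i}{j}K_{i-j+1}K_{j+1}$ and the differential equation $G'=-G^2$ for the exponential generating function, obtaining $K_{i+1}=(-1)^i\,i!$. You instead set up M\"obius inversion on the partition lattice $\Pi_r$ (with the correct direction, $g(\pi)=\sum_{\pi'\geq\pi}f(\pi')$, and the correct identification of $g(\pi)$ as a product of geometric sums equal to $\ell^{\#\pi}$ or $0$), and then quote the classical formula $\mu(\hat0,\pi)=\prod_{B\in\pi}(-1)^{\#B-1}(\#B-1)!$. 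This is cleaner and shorter, at the cost of importing a standard but nontrivial fact: the paper's graph-counting computation is in effect a self-contained re-derivation of exactly that M\"obius function, so your argument buys brevity by outsourcing the paper's only real calculation to the literature, while the paper's version stays elementary and self-contained. Your handling of the bookkeeping (blocks as subsets of the index set $\{1,\dots,r\}$ rather than of the multiset of parts, so that repeated parts are distinguished) is also correct and matches the intended reading of the statement.
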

\begin{proof}
  Let $A_{(i,j)}$ be the sets (indexed by the pairs $(i,j)$) of indices $(\iota_1,\dots,\iota_r)$
  such that $\iota_i=\iota_j$, let $A_0$ be the set of all possible indices, and for
  $A\subseteq{}A_0$ denote by $\Sigma(A)$ the sum over all the indices in $A$. By
  inclusion-exclusion we have that
\begin{align*}
  \Sigma_\lambda(\ell) &= \Sigma(A_0)-\Sigma(\cup_{(i,j)} A_{(i,j)}) \\
    &= \Sigma(A_0)-\sum_{(i,j)}\Sigma(A_{(i,j)}) + \sum_{(i,j)\neq (i',j')}\Sigma(A_{(i,j)}\cap A_{(i',j')}) - \dots
\end{align*}
Now let $A$ be the intersection of all the sets $A_{(i_k,j_k)}$ for a collection of pairs
$P=\{(i_1,j_1),\dots,(i_s,j_s)\}_{s\in{}S}$, if we consider the graph with $R=\{1,\dots,r\}$ as
vertices and the $(i_k,j_k)$ as edges we have that if we split $R$ in connected components
$R=\sqcup_{t\in{T}}R_t$ then the allowed indices $\iota$ are those constant on each $R_t$, and
calling $\iota_t$ the value taken on $R_t$ the sum $\Sigma(A)$ becomes
\[
  \Sigma(A) = \prod_{t\in T} \sum_{\iota_t=0}^{\ell-1}\zeta_\ell^{\iota_t(\sum_{r\in{}R_t}\lambda_r)},
\]
and this sum is $\ell^{\#T}$ when all the $\sum_{r\in{}R_t}\lambda_r$ are multiple of $\ell$ and $0$
if not. Note that $\Sigma(A)$ appears with sign equal to $(-1)^{\#S}$ in the inclusion-exclusion, so
for each partition of $R$ in sets $R_t$ such that the sum of $\lambda_r$ for $r\in{}R_t$ is multiple
of $\ell$ we have that to consider the all graphs with set of vertices $R$ and such that each $R_t$
is a connected component, and count the number of graphs with an even number of edges minus those
with a odd number of edges. Now the total difference is the product of the differences over all the
connected components, so we have
\[
   \Sigma_\lambda(\ell) = \sum_{\lambda=\sqcup_{j\in{}J}\lambda^{(j)}} \ell^{\#J} \cdot \prod K_{\#\lambda^{(j)}}
\]
where for each $i$ we denote by $K_i$ the difference of the number of connected graphs on $i$
vertices having an even and odd number of edges.

The difference of the number of connected graphs $K_i$ on $i$ vertices with an even or odd number of
vertices can be computed fixing an edge, and considering the graphs obtained adding or removing that
edge. Those such that with or without it are connected come in pairs with an even and odd number of
edges, the other graphs are obtained connecting two other connected graphs on $j$ and $i-j$
vertices. In particular choosing $j-1$ vertices to make one component with the first vertex of our
distinguished edges we obtain
\[
 K_{i+2} = - \sum_{j}^{i} \binom{i}{j} K_{i-j+1}K_{j+1}
\]
for $i\geq0$, and $K_1=1$. Calling $G(X)$ the exponential generating function
$\sum_{i=0}^\infty\frac{K_{i+1}}{i!}X^i$ we obtain that
\[
  \frac{d}{dX}G(X) = -G(X)^2
\]
with the additional condition that $K_1=1$, and this equation is clearly satisfied by $\sfr{1}{1+X}$
which can be the only solution. Consequently $K_{i+1}=(-1)^i\cdot{}i!$ and the lemma is proved.
\end{proof}

\bibliographystyle{amsalpha}
\bibliography{biblio}

\end{document}